\documentclass[10pt]{amsart}
\usepackage[margin=1.4in]{geometry}
\usepackage{colortbl}
\numberwithin{equation}{section}

\newtheorem{theorem}{Theorem}[section]
\newtheorem{lemma}{Lemma}[section]

\newtheorem{proposition}{Proposition}[section]
\newtheorem{remark}{Remark}[section]
\newtheorem{definition}{Definition}[section]

\newcommand{\R}{\mathbb R}

\newcommand{\N}{\mathbb N}

\newcommand{\bq}{\begin{equation}}
\newcommand{\eq}{\end{equation}}

\newcommand{\e}{\varepsilon}
\newcommand{\lt}{\left}
\newcommand{\rt}{\right}
\newcommand{\mc}{\mathcal{C}}

\newcommand{\pa}{\partial}

\newcommand{\intr}{\int_{\R_+}}
\newcommand{\intrr}{\int_{\R \times \R_+}}

\newcommand{\mt}{\mathcal{T}}
\newcommand{\mh}{\mathcal{H}}




\begin{document}

\title[Paveri-Fontana model]{Existence and hydrodynamic limit for a Paveri-Fontana type kinetic traffic model}

\author[Choi]{Young-Pil Choi}
\address[Young-Pil Choi]{\newline Department of Mathematics\newline
Yonsei University, 50 Yonsei-Ro, Seodaemun-Gu, Seoul 03722, Republic of Korea}
\email{ypchoi@yonsei.ac.kr}

\author[Yun]{Seok-Bae Yun}
\address[Seok-Bae Yun]{\newline Department of Mathematics\newline Sungkyunkwan University, Suwon 440-746, Republic of Korea}
\email{sbyun01@skku.edu}



\keywords{}

\begin{abstract} 
We study a Paveri-Fontana type model, which describes the evolution of the mesoscopic distribution of vehicles through a combined effect of adjustment of the velocity with respect to nearby vehicles, and slowing down and speeding up of the vehicles arising as a result of exchange of velocity with the vehicles on the same location on the road. We first prove the global-in-time existence of weak solutions. The proof is via energy, $L^p$, and compact support estimates together with velocity averaging lemma. The combined effect of alignment nature of $Q_r$, which keeps the characteristic from spreading, and the dissipative nature of $Q_i$, which gives the uniform control on the size of the distribution function,
is crucially used in the estimates. We also rigorously establish a hydrodynamic limit to the presureless Euler equation by employing the relative entropy combined with the Monge-Kantorovich-Rubinstein distance.
\end{abstract}

\maketitle \centerline{\date}
\tableofcontents

\section{Introduction}
The study of the traffic flow at the kinetic level started in the seminal work \cite{Prigogine1,Prigogine2}
in which Prigogine proposed a kinetic model that explains the traffic flow in a single lane
road through the transport,  exchange of velocity, and the relaxation to the
desired velocity distribution. The relaxation part, however, was criticized by many to be unrealistic, since the desired velocity distribution is a priori prescribed. In order to remedy this drawback, Paveri-Fontana \cite{P-F} proposed a model where the density function has a further state variable, desired velocity, see Section \ref{ssec_PF} for more details. In the case when the desired velocity is fixed to be the local vehicle velocity, which is reasonable since the drivers in the road adjust their velocities according to the vehicles around it, the Paveri-Fontana model takes the following form:
\bq\label{main_sys}
\pa_t f + v \pa_x f = Q(f),
\eq
subject to initial data
\[
f(x,v,0) =: f_0(x,v). 
\]
The vehicle distribution function $f=f(x,v,t)$ denotes the number density function on the phase point $(x,v) \in \R \times \R_+$ at time $t\in \R_+$. The local vehicle density $\rho = \rho(x,t)$ and the local vehicle velocity $u = u(x,t)$ are defined by
\[
\rho(x,t) := \int_0^\infty f(x,v,t)\,dv \quad \mbox{and} \quad u(x,t) :=  \frac{\int_0^\infty vf(x,v,t)\,dv}{\int_0^\infty f(x,v,t)\,dv},
\]
respectively. The operator $Q = Q(f)$, referred to very often as the collision operator in the literature of collisional kinetic theory, is in charge of the interactions between vehicles and their effects on the states. The operator $Q$ consists of relaxation operator $Q_r = Q_r(f)$ and the interaction operator $Q_i = Q_i(f)$:
$$
Q(f)=Q_r(f)+Q_i(f).
$$ 
In our case, the relaxation operator $Q_r$ is given by 
$$Q_r(f) = \pa_v ((v-u)f),$$
which explains the driver's adjustment of velocity with respect to the traffic condition around it. The interaction operator $Q_i$ is presented as the difference of the gain term $Q_i^+ = Q_i^+(f)$ and the loss term $Q_i^- = Q_i^-(f)$: 
\[
Q_i(f)=Q_i^+(f)-Q_i^-(f),
\]
where
\[
Q_i^+(f)=f(x,v,t)\int^{\infty}_v(v_*-v)f(x,v_*,t)\,dv_*
\]
and
\[
Q_i^-(f)=f(x,v,t)\int^{v}_0(v-v_*)f(x,v_*,t)\,dv_*.
\]
The gain term $Q_i^+$ represents the slowing down of the cars running faster than $v$, while the loss term $Q_i^-$ denotes the acceleration of the cars running slower than $v$.  Putting together, the operator $Q$ leads to the concentration of the velocity distribution. In particular, the interaction operator $Q_i$ can be more clearly manifested in the following reformulation:
\[
Q_i(f)(x,v,t)=f(x,v,t)\int^{\infty}_0(v_*-v)f(x,v_*,t)\,dv_*=\rho(x,t) \big(u(x,t)-v\big)f(x,v,t).
\]
In view of this, we can understand that the relaxation term is attracting the trajectory towards the desired velocity, while the collision operator rearranges the velocity distribution of the cars to be concentrated on the desired velocity. Therefore, our model \eqref{main_sys} indicates that the car distribution will eventually converge to the fluid dynamic mono-kinetic configuration.

Being one of the pioneering models in the kinetic theory of traffic flow, to the best of authors knowledge, the existence of solutions for (\ref{main_sys}) and appropriate scaling limit have never been studied in the literature, which is the main motivation of the current work.

\subsection{Formal derivation of \eqref{main_sys}\label{ssec_PF} from the Paveri-Fontana model} As mentioned before, Paveri-Fontana model takes into account the desired velocity as a further state variable $w$. More precisely, the Paveri-Fontana model \cite{P-F} is given by
\begin{align}\label{PF}
\begin{aligned}
\pa_t g + v\pa_x g &= -\pa_v((w-v)g/T) + (1-P)f(x,v,t)\int_v^\infty (v_* - v)g(x,v_*,w,t)\,dv_* \cr
&\quad - (1-P)g(x,v,w,t)\int_0^v (v-v_*)f(x,v_*,t)\,dv_*,
\end{aligned}
\end{align}
where $g = g(x,v,w,t)$ is a generalized distribution, $T$ and $P$ denote the relaxation time and the probability of passing, both depend on the $g$, respectively. Note that the density function $f$ can be recovered by integrating $g$ with respect to the desired velocity variable $w$, i.e.,
\[
\int_{\R_+} g(x,v,w,t)\,dw = f(x,v,t).
\]
This together with \eqref{PF} yields that 
\[
\pa_t f + v \pa_x f = - \pa_v \lt(\frac1T \int_{\R_+} wg\,dw - \frac{vf}{T}\rt) + (1-P)\rho(u-v)f.
\]
We now assume that the desired velocity is given as the local vehicle velocity, i.e.,
\[
g(x,v,w,t) = f(x,v,t)\otimes \delta_{u(x,t)}(w).
\]
Furthermore, we assume that the relaxation time $T$ and the probability of non-passing $1-P$ are constants and normalized to unity. These simplifications lead to our main kinetic traffic flow model \eqref{main_sys}.

\subsection{Main results} Before we define our solution concept and state the main results, we introduce several norms, function spaces, and notational conventions.
\begin{itemize}
\item For functions $f(x,v)$ and $g(x)$, $\|f\|_{L^p}$ and $\|g\|_{L^p}$ denote the usual $L^p(\R \times \R_+)$-norm and $L^p(\R)$-norm, respectively.
\item For any nonnegative integer $s$, $H^s$ denotes the $s$-th order $L^2$ Sobolev space.
\item $\mc^s([0,T];E)$ is the set of $s$-times continuously differentiable functions from an interval $[0,T]\subset \R$ into a Banach space $E$, and $L^p(0,T;E)$ is the set of the $L^p$ functions from an interval $(0,T)$ to a Banach space $E$.
\item We  denote by $C$ a generic, not necessarily identical, positive constant. $C = C(\alpha, \beta, \dots )$ or $C = C_{\alpha,\beta,\dots}$ represents the positive constant depending on $\alpha, \beta, \dots$.
\end{itemize}
We then define the notion of weak solutions to the system \eqref{main_sys}.
\begin{definition}\label{def_weak} Let $T>0$. We say that $f$ is a weak solution to the system \eqref{main_sys} on the time interval $[0,T]$ if the following conditions are satisfied:
\begin{itemize}
\item[(i)] $f \in L^\infty(0,T;(L^1_+ \cap L^\infty)(\R \times \R_+))$,
\item[(ii)] for all $\phi \in \mc^1_c(\R \times \R_+ \times [0,T])$ with $\phi(\cdot,\cdot,T) = 0$,
\begin{align*}
&-\int_{\R \times \R_+} f_0(x,v)\phi(x,v,0)\,dxdv - \int_0^T\int_{\R \times \R_+} f\lt( \pa_t \phi + v\pa_x \phi + (u -v)\pa_v \phi\rt)dxdvdt\cr
&\hspace{3cm}= \int_0^T \int_{\R \times \R_+}  \rho(u-v)f \phi\,dxdvdt.
\end{align*}
\end{itemize}
\end{definition}
We are now ready to state our first main result on the existence theory. 
\begin{theorem}\label{thm_main} Let $T >0$. Suppose that the initial data $f_0 \in L^\infty(\R \times \R_+)$ is compactly supported in both position and velocity. We also assume that $f_0(x,0)=0$. Then there exists at least one weak solution to the equation \eqref{main_sys} in the sense of Definition \ref{def_weak} such that
\begin{itemize}
\item[(i)] $f$ is compactly supported both in position and velocity,  with $f(x,0,t)=0$,
\item[(ii)] $\|f\|_{L^\infty(\R \times \R_+ \times (0,T))} \leq C\|f_0\|_{L^\infty(\R \times \R_+)}$,
\item[(iii)] The total energy is non-increasing in time:
\[
\frac12\int_{\R \times \R_+} v^2 f\,dxdv + \int_0^t \int_{\R \times \R_+} (u -v)^2f\,dxdvds \leq \frac12\int_{\R \times \R_+} v^2 f_0\,dxdv,
\]
for almost every $t \in (0,T)$.
\end{itemize}
\end{theorem}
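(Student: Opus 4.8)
\noindent\emph{Proof proposal.} I would follow the standard three–step scheme for kinetic models of this type: construct regularized solutions, derive estimates uniform in the regularization, and pass to the limit by compactness using a velocity averaging lemma. For the construction, mollify $f_0$ (keeping it nonnegative, compactly supported in $\R\times\R_+$, and vanishing on $\{v=0\}$) and replace the macroscopic velocity by a regularized one, for instance $u^\e:=\big((\rho u)\ast\eta_\e\big)\big/\big((\rho\ast\eta_\e)+\e\big)$, which is smooth, bounded by $V_+(0):=\sup\{v:f_0(\cdot,v)>0\}$, and causes no trouble where $\rho$ vanishes. With frozen coefficients the equation is a linear transport equation with a zeroth–order term, solvable along characteristics, so a Picard iteration produces nonnegative classical solutions $f^\e$ on a short interval, extended to $[0,T]$ by the bounds below. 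Since $\int_0^\infty Q(f)\,dv=0$, mass is conserved, $\|f^\e(t)\|_{L^1}=\|f_0\|_{L^1}$.

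\noindent\textit{Geometric estimates (the alignment role of $Q_r$).} Rewrite \eqref{main_sys} as $\partial_t f+v\partial_x f+(u-v)\partial_v f=\big(1+\rho(u-v)\big)f$; the velocity characteristic solves $\dot V=u-V$. Since $u$ is a velocity average, $0\le u\le \sup\{v:f(\cdot,v,\cdot)>0\}$, so at a characteristic issuing from the top of the velocity support $\dot V=u-V\le0$: the velocity support never spreads upward and stays in $[0,V_+(0)]$. Also $\dot V\ge-V$ gives $V(t)\ge V(0)e^{-t}>0$, which propagates $f(x,0,t)=0$ and keeps the support away from $v=0$ on $[0,T]$. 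Finally $|V|\le V_+(0)$ and $\dot X=V$ force $\mathrm{supp}_x f(\cdot,\cdot,t)\subset\mathrm{supp}_x f_0+[-V_+(0)t,\,V_+(0)t]$. All of these are uniform in $\e$.

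\noindent\textit{Energy and $L^\infty$ estimates (the dissipative role of $Q_i$; the main obstacle).} Testing \eqref{main_sys} with $\tfrac12v^2$, the transport term vanishes, $Q_r$ contributes $-\iint(u-v)^2f\,dxdv$, and $Q_i$ contributes $\tfrac12\int\rho^2\big(u\langle v^2\rangle-\langle v^3\rangle\big)\,dx$ with $\langle\varphi\rangle:=\rho^{-1}\int\varphi f\,dv$. Because $v\ge0$, the maps $v\mapsto v$ and $v\mapsto v^2$ are comonotone, so Chebyshev's correlation inequality gives $\langle v^3\rangle\ge\langle v\rangle\langle v^2\rangle=u\langle v^2\rangle$; hence the $Q_i$ term is $\le0$ and, after integrating in time, we obtain (iii), and in particular the dissipation bound $\int_0^T\iint(u-v)^2f\,dxdvdt\le\tfrac12\iint v^2f_0\,dxdv$, uniform in $\e$. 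For (ii), test with $pf^{p-1}$: after integrations by parts, $\tfrac{d}{dt}\|f\|_{L^p}^p=(p-1)\|f\|_{L^p}^p+p\iint\rho(u-v)f^p\,dxdv$. The first term is harmless (it only produces an $e^{(1-1/p)t}$ factor after $p$–th roots), so the difficulty is the quadratic interaction term, which is \emph{not} sign–definite: the crude bound $|\rho(u-v)|\le V_+(0)^2\|f\|_{L^\infty}$ yields only a Riccati inequality and would allow finite–time blow–up. The point is that the growth of $\|f\|_{L^\infty}$ is driven by concentration of $f$, where $u$ is close to the bulk velocity so that $\rho(u-v)$ is small; quantitatively, along the velocity characteristic $\int_0^t(u-V)\,ds=V(t)-V(0)$ is bounded by $2V_+(0)$ uniformly in $t$, and the energy dissipation makes $\rho(u-v)^2$ integrable in $(x,t)$. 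Combining these with the compact–support bounds $0\le\rho\le V_+(0)\|f\|_{L^\infty}$ one controls the effective growth rate of $\|f(t)\|_{L^\infty}$ on $[0,T]$, obtaining $\|f^\e\|_{L^\infty(\R\times\R_+\times(0,T))}\le C(T)\|f_0\|_{L^\infty}$ uniformly in $\e$ and $p$, hence (letting $p\to\infty$) item (ii); in particular $\rho^\e$ is uniformly bounded in $L^\infty$. I expect this $L^\infty$ bound to be the main obstacle.

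\noindent\textit{Compactness and passage to the limit.} By the previous steps $(f^\e)$ is bounded in $L^\infty(\R\times\R_+\times(0,T))$ with uniformly compact support, so along a subsequence $f^\e\to f$ weakly-$*$, with $f\ge0$, compactly supported, $f(x,0,t)=0$; (iii) passes to the limit by weak lower semicontinuity. The nonlinear terms involve only the velocity averages $\rho^\e=\int f^\e\,dv$ and $\rho^\e u^\e=\int vf^\e\,dv$; since $\partial_t f^\e+v\partial_x f^\e=\partial_v\big((v-u^\e)f^\e\big)+\rho^\e(u^\e-v)f^\e$ has right–hand side bounded in $L^p_{t,x,v}+\partial_v L^p_{t,x,v}$ with compact support, the velocity averaging lemma yields strong $L^p_{\mathrm{loc}}$ compactness of $\rho^\e$ and $\rho^\e u^\e$, so $\rho^\e\to\rho$ and $\rho^\e u^\e\to\rho u$ strongly and a.e. One then passes to the limit in the weak formulation of Definition~\ref{def_weak}: the transport and $v$–weighted terms by weak-$*$ convergence of $f^\e$ against fixed smooth weights; the terms $\int\rho^\e u^\e f^\e\phi$ and $\int\rho^\e vf^\e\phi$ by pairing a strongly convergent macroscopic factor with a strongly convergent velocity average; and the term $\int u^\e f^\e\partial_v\phi\,dv=u^\e\int f^\e\partial_v\phi\,dv$ by using that $u^\e$ is uniformly bounded by $V_+(0)$ and converges a.e.\ on $\{\rho>0\}$, while $\int f^\e\partial_v\phi\,dv\to0$ a.e.\ on $\{\rho=0\}$ (where also $f=0$), together with dominated convergence. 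This identifies $f$ as a weak solution with the stated properties; the recurring technical point is that wherever $\rho$ degenerates $f$ degenerates too, so all products involving $u$ are well defined.
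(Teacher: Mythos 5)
Your overall architecture (regularize $u$, uniform support/energy/$L^\infty$ estimates, velocity averaging, Egorov-type treatment of the products on $\{\rho>0\}$ versus $\{\rho=0\}$) is the same as the paper's, and the support and energy estimates you give are essentially the ones used there. However, the step you yourself single out as the main obstacle --- the uniform $L^\infty$ bound (ii) --- is not actually closed, and the ingredient you invoke for it is the wrong one. You say that ``the energy dissipation makes $\rho(u-v)^2$ integrable in $(x,t)$,'' but the energy identity only yields $\int_0^T\!\!\int (u-v)^2 f\,dxdvdt\le C$: this dissipation comes from $Q_r$ and carries no factor of $\rho$. Since $\rho\le C\|f\|_{L^\infty}$ is exactly the quantity you are trying to bound, you cannot upgrade $(u-v)^2f$ to $\rho(u-v)^2f$ without circularity. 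The correct source is the \emph{first-moment (momentum) balance}, where $Q_i$ contributes $-\int\rho(u-v)^2f\,dv$, giving $\frac{d}{dt}\int vf\,dxdv+\int\rho(u-v)^2f\,dxdv\le C$ (in the regularized setting one also needs the convolution inequality \eqref{gd_ineq} and the energy bound to control the extra term produced by the mollified $u$). With that time-integrable quantity in hand, the $L^p$ estimate is closed not by a qualitative ``concentration'' argument but by the splitting $p\int f^p\rho(u-v)\le p\|f\|_{L^\infty}^{p-1/2}\int f^{1/2}\rho|u-v|$ followed by Cauchy--Schwarz (using the compact $v$-support and conserved mass), which yields $\frac{d}{dt}\|f\|_{L^p}\le C\|f\|_{L^p}+\int\rho(u-v)^2f\,dxdv$ and then Gr\"onwall; your remark that $\int_0^t(u-V)\,ds=V(t)-V(0)$ is bounded along characteristics does not control the zeroth-order coefficient $\rho(u-v)$, again because of the unbounded factor $\rho$.

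A secondary, related gap is in the construction. The paper regularizes not only $u$ but also the interaction term, replacing $\rho(u-v)f$ by $\rho_\e(u_\e-v)f_\e/(1+\e\rho_\e(1+u_\e))$, precisely so that for fixed $\e$ the linearized iteration has a coefficient bounded by $C/\e$, is globally solvable, and is Cauchy in $L^\infty$; the extra factor is then removed in the limit $\e\to0$ using the same momentum-dissipation bound. Your plan --- Picard iteration for the unregularized quadratic term ``on a short interval, extended to $[0,T]$ by the bounds below'' --- is circular as written, because the continuation requires exactly the uniform $L^\infty$ (and in fact some $W^{1,\infty}$) control whose proof is the missing step above. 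Once the momentum-dissipation identity and the $L^p$ splitting are supplied (and either the interaction term is regularized or the local-in-time construction is justified independently), the rest of your compactness and limit-passage argument goes through as in the paper.
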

\begin{remark}
The statement (i) in the theorem above means that the vehicles can run only at a finite speed, and never come to a halt on the road. The former is natural in that the vehicles cannot run with an infinite speed, and the latter is reasonable when, for example, the vehicles are running on a highway. 
\end{remark}
\begin{remark}
The property $f(x,0,t)=0$ is also crucially used in the proof since otherwise the boundary terms in the integration by parts with respect to the velocity variable do not vanish.
\end{remark}
The proof combines  the alignment effect of $Q_r$ and the dissipative nature of $Q_i$. 
We first construct approximate solutions $f_\e$ parametrized by a smoothing parameter $\e$, see Section 3.3 for details. 
The alignment property of $Q_r$ prevents the spreading of the characteristics and keep the support of the distriubtion function compact, leading to the conclusion that the approximate solution $f_\e$ is bounded for each $\e$ globally in time.
Then, the dissipative nature of $Q_i$ with respect to $vdv$ and $v^2dv$  gives the coercivity that controls the boundedness of such approximate solutions uniformly in $\e$:
\begin{align*}
	\|f_\e(\cdot,\cdot,t)\|_{L^\infty} &\leq C\|f_{0,\e}\|_{L^\infty} + C\int_0^t \int_{\R \times \R_+} \frac{\rho_\e(u_\e - v)^2 f_\e}{1+ \e \rho_\e (1 + u_\e)}\,dxdvds.
\end{align*}
These two estimates, the boundedness of $f_\e$ and the non-spreading of trajectories, are strong enough enough to control the high nonlinearity of the collision operator in the weak limit, and enables one to derive the global-in-time weak solution, instead of having to resort to weaker notions of solutions such as the renormalized solutions.

Our next goal is to study the asymptotic analysis of \eqref{main_sys}. More precisely, we are interested in the limit $\e \to 0$ in the following equation:
\begin{align}\label{main_hydro}
	\begin{aligned}
&\pa_t f^\e + v \pa_x f^\e =  \frac1\e Q(f^\e) =  \frac1\e\pa_v ((v-u^\e)f^\e) + \frac1\e \rho^\e(u^\e - v)f^\e,\cr
\end{aligned}
\end{align}
subject to the initial data 
\[
f^{\varepsilon}(x,v,0)=:f^{\varepsilon}_{0}(x,v)
\]
with
\begin{align*}
	\rho^\e(x,t) &:= \int_0^\infty f^\e(x,v,t)\,dv, \quad\rho^\e(x,t) u^\e(x,t) := \int_0^\infty vf^\e(x,v,t)\,dv,
\end{align*}
which is obtained from rewriting  \eqref{main_sys} in the typical Euler scaling:
\[
x \rightarrow x/\e\quad\mbox{and} \quad t \rightarrow t/\e.
\]
As $\e \to 0$, we expect the mono-kinetic ansatz for $f^\e$, i.e., 
\[
f^\e(x,v,t) \to \rho(x,t) \otimes \delta_{u(x,t)}(v)
\]
in the sense of distributions. Here $\rho$ and $u$ satisfy the following pressureless Euler equations:
\begin{align}\label{Euler_eq}
	\begin{aligned}
	&\pa_t \rho + \pa_x (\rho u) = 0,\cr
	&\pa_t (\rho u) + \pa_x (\rho u^2) = 0.
\end{aligned}
\end{align}

The main tool we employ for the hydrodynamic limit from \eqref{main_hydro} to \eqref{Euler_eq} is based on the relative entropy method for kinetic flocking model developed in \cite{KMT15}, see also \cite{Dafe}. In \cite{KMT15}, however, the diffusion term plays a significant role in that the entropy functional is strictly convex with respect to both $\rho$ and $\rho u$. In the absence of the diffusion term, the functional is not convex for $\rho$ any more, see Section \ref{sec_hydro} for details, and the strategy used in \cite{KMT15} breaks down. Remedies to overcome this are suggested in recent works \cite{CC_pre, C_pre, FK19}, in which a suitable estimate with respect to the second-order Wasserstein distance is augmented to provide the convergence of $\rho^\e$. Inspired by these works, we adopt the Monge-Kantorovich-Rubinstein (in short MKR) distance:
\[
d_{MKR}(\rho_1,\rho_2) := \inf_\gamma \int_{\R \times \R} | x- y| \,d\gamma(x,y), \quad \mbox{for} \quad \rho_1, \rho_2 \in \mathcal{P}_1(\R),
\]
where the infimum runs over all transference plans, i.e., all probability measures $\gamma$ on $\R \times \R$ with first and second marginals $\rho_1$ and $\rho_2$, respectively. Here $\mathcal{P}_1(\R)$ stands for the set of probability measures on $\R$ with finite first-order moment. Note that MKR distance is equivalent to the bounded Lipschitz distance, and it is also called first-order Wasserstein distance. We employ the idea recently developed in \cite{C_pre} to have the quantitative estimate for error between local densities in MKR distance under suitable assumptions on the initial data.

In order to state our second main result of the present work, we present the notion of strong solutions to the pressureless Euler system \eqref{Euler_eq} in the proposition below. 
\begin{proposition}Let $s>2$, and suppose $(\rho_0,u_0) \in H^s(\R) \times H^{s+1}(\R)$ and $\rho_0 > 0$ on $\R$. Then, for any positive constants $N<M$, there exists a positive constant $T_0$ depending only on $N$ and $M$ such that if $\|(\rho_0, u_0)\|_{H^s \times H^{s+1}} \leq N$, then the system \eqref{Euler_eq} with $(\rho_0, u_0)$ admits a unique solution $(\rho,u) \in \mc([0,T_0];H^s(\R)) \times \mc([0,T_0];H^{s+1}(\R))$ satisfying
\[
\sup_{0 \leq t \leq T_0} \|(\rho(\cdot,t), u(\cdot,t))\|_{H^s \times H^{s+1}} \leq M.
\]
\end{proposition}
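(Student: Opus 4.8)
The plan is to exploit the fact that, for positive density, the pressureless Euler system \eqref{Euler_eq} essentially decouples. Expanding the momentum equation and using the mass equation gives the algebraic identity
\[
\pa_t(\rho u) + \pa_x(\rho u^2) = u\big(\pa_t\rho + \pa_x(\rho u)\big) + \rho\big(\pa_t u + u\pa_x u\big),
\]
so that, as long as $\rho>0$, \eqref{Euler_eq} is equivalent to the pair
\[
\pa_t u + u\pa_x u = 0, \qquad \pa_t \rho + u\pa_x\rho + \rho\,\pa_x u = 0 .
\]
I would therefore first construct $u$ as a solution of the inviscid Burgers equation, then construct $\rho$ by solving the resulting \emph{linear} transport equation, and finally transfer the conclusions back to \eqref{Euler_eq}.

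For the $u$--component, note that $s+1>3$, so $H^{s+1}(\R)$ is an algebra embedded in $C^2(\R)$, and Burgers' equation is a scalar quasilinear hyperbolic equation. Via a standard approximation scheme (mollified Burgers, or Picard iteration on the characteristic flow) together with the energy estimate
\[
\frac{d}{dt}\|u(\cdot,t)\|_{H^{s+1}}^2 \le C\|\pa_x u(\cdot,t)\|_{L^\infty}\|u(\cdot,t)\|_{H^{s+1}}^2 \le C\|u(\cdot,t)\|_{H^{s+1}}^3,
\]
derived by applying $\pa_x^k$ for $k\le s+1$, pairing with $\pa_x^k u$ in $L^2$, integrating by parts, and controlling the commutators $[\pa_x^k,u]\pa_x u$ by the Moser inequality, one obtains a unique $u\in \mc([0,T_1];H^{s+1}(\R))$ with $\sup_{[0,T_1]}\|u\|_{H^{s+1}}\le M$ for some $T_1=T_1(N,M)>0$, provided $\|u_0\|_{H^{s+1}}\le N$. (Continuity in time follows from the energy \emph{identity} plus weak continuity, or directly from the construction.)

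For the $\rho$--component, with $u$ now a known coefficient, the equation for $\rho$ is linear transport with transport velocity $u\in \mc([0,T_1];H^{s+1})\hookrightarrow W^{1,\infty}$ and source coefficient $\pa_x u$. Standard linear transport theory in $H^s$ (again energy estimates with commutator and product bounds) yields a unique $\rho\in\mc([0,T_1];H^s(\R))$, and the key point is that the estimate is \emph{linear} in $\|\rho\|_{H^s}^2$:
\[
\frac{d}{dt}\|\rho(\cdot,t)\|_{H^s}^2 \le C\big(\|\pa_x u(\cdot,t)\|_{L^\infty}+\|u(\cdot,t)\|_{H^{s+1}}\big)\|\rho(\cdot,t)\|_{H^s}^2 \le C_M\|\rho(\cdot,t)\|_{H^s}^2 ,
\]
so that, after shrinking $T_1$ to some $T_0=T_0(N,M)$, Gronwall's lemma gives $\sup_{[0,T_0]}\|\rho\|_{H^s}\le M$. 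Positivity of $\rho$ is preserved, since integrating along the characteristics $\dot X=u(X,t)$ gives $\rho(x,t)=\rho_0(X(0;x,t))\exp\!\big(-\int_0^t \pa_x u(X(\tau;x,t),\tau)\,d\tau\big)>0$; this in particular legitimizes the equivalence used above and lets us return to \eqref{Euler_eq}. Uniqueness within the stated class follows by the same reformulation: any two solutions $(\rho_i,u_i)$ with $\rho_i>0$ have $u_i$ solving Burgers with the same data, hence $u_1=u_2$ by an $L^2$ estimate on the difference (using $\pa_x u_i\in L^\infty$), and then $\rho_1-\rho_2$ solves the homogeneous transport equation with zero data, so $\rho_1=\rho_2$.

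I do not expect a serious obstacle here: once the decoupling is observed, the argument is a textbook application of the energy method, and one could even simply invoke general results for quasilinear symmetric hyperbolic systems. The only genuinely delicate points are bookkeeping the derivative count — $u$ must be carried one order smoother than $\rho$ precisely because the source term $\rho\,\pa_x u$ in the continuity equation costs one derivative of $u$ — and propagating the quantitative statement ``norm $\le M$ on $[0,T_0(N,M)]$'' through the two successive estimates, being careful that the constants depend only on $N$ and $M$ and not on the (a priori unbounded) norm of the solution.
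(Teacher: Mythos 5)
Your argument is correct, but it is not the route the paper takes: the paper gives no self-contained proof of this proposition and simply remarks that existence and uniqueness follow by ``almost the same argument as in \cite{CK16}'', i.e., by the standard energy method applied to the coupled quasilinear system in the nonconservative variables $(\rho,u)$ (an iteration on the linearized coupled system together with Moser/commutator estimates), a framework built there to also accommodate a drag coupling with a viscous fluid. You instead exploit the special structure of the standalone pressureless system: for $\rho>0$ it is equivalent to Burgers' equation for $u$ alone followed by a \emph{linear} continuity equation for $\rho$, so the construction becomes sequential --- scalar quasilinear theory in $H^{s+1}$ for $u$, then linear transport theory in $H^{s}$ for $\rho$ --- and the bookkeeping of $T_0=T_0(N,M)$ is transparent because the $\rho$-estimate is linear once $\sup_{[0,T_1]}\|u\|_{H^{s+1}}\le M$ is secured. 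This buys a more elementary, self-contained proof; what it does not buy is the robustness of the cited approach under genuine coupling (source terms in the $u$-equation destroy the decoupling). Two points are worth making explicit in your write-up: (a) for the existence direction no positivity is needed at all, since $\pa_t(\rho u)+\pa_x(\rho u^2)=u\lt(\pa_t\rho+\pa_x(\rho u)\rt)+\rho\lt(\pa_t u+u\pa_x u\rt)$ vanishes identically for the pair you construct; (b) in the uniqueness step you restrict to competitors with $\rho_i>0$, but this is automatic in the stated class: any $u_i\in\mc([0,T_0];H^{s+1}(\R))$ is Lipschitz, so integrating the continuity equation along its characteristics gives $\rho_i$ as $\rho_0$ times an exponential factor, hence $\rho_i>0$; therefore the reduction to Burgers applies to every solution in the class and the uniqueness argument is complete as stated.
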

\begin{remark}
The weak solution for the scaled Paveri-Fontana model (\ref{main_hydro}) can be shown to exist globally in time using the exactly same argument as was employed for Theorem \ref{thm_main}. The existence and uniqueness of strong solutions for the pressureless Euler system \eqref{Euler_eq} can be obtained by using almost the same argument as in \cite{CK16}.
\end{remark}

\begin{theorem}\label{thm_hydro} Let $f^\e$ be a weak solution to the equation \eqref{main_hydro} and $(\rho,u)$ be a strong solution to the system \eqref{Euler_eq} on the time interval $[0,T]$. Suppose that the initial data $f^\e_0$ and $(\rho_0, u_0)$ satisfy the following assumptions.
	\begin{itemize}
		\item[{\bf (H1)}] The initial data are well-prepared:
		\[
		\int_\R \rho_0^\e(u_0 - u^\e_0)^2 \,dx = \mathcal{O}(\e) \quad \mbox{and} \quad \int_\R\lt( \intr v^2 f_0^\e \,dv  - \rho_0 u_0^2\rt)\,dx  = \mathcal{O}(\e).
		\]
		\item[{\bf (H2)}] The local densities $\rho_0$ and $\rho^\e_0$ satisfy
		\[
		d_{MKR} (\rho_0, \rho^\e_0) = \mathcal{O}(\sqrt\e).
		\]
	\end{itemize}
	Then we obtain
	\[
	\sup_{0 \leq t \leq T} \int_\R \rho^\e(x,t)(u^\e-u)^2(x,t)\,dx \leq \mathcal{O}(\e) \quad \mbox{and} \quad \sup_{0 \leq t \leq T}d_{MKR}(\rho^\e(\cdot,t), \rho(\cdot,t)) \leq \mathcal{O}(\sqrt\e).
	\]
	In particular, we have
	\[
	\sup_{0 \leq t \leq T}d_{MKR}(f^\e(\cdot,\cdot,t), \rho(\cdot,t)\otimes \delta_{u(\cdot,t)}(\cdot)) \leq \mathcal{O}(\sqrt\e),
	\]
	that is, $f^\e$ converges to $\rho\otimes \delta_u$ weakly-* as measures. 
	
\end{theorem}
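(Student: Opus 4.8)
The plan is to run a relative-entropy (modulated kinetic energy) argument, in the spirit of \cite{KMT15,FK19,C_pre}, reinforced by a Gr\"onwall estimate for the MKR distance. I would use throughout the following uniform-in-$\e$ facts, all of which hold for the weak solution $f^\e$ of \eqref{main_hydro} by the very argument proving Theorem \ref{thm_main}: $f^\e(\cdot,\cdot,t)$ is supported in $\R\times[0,R_0]$ with $R_0$ independent of $\e,t$; $\|\rho^\e\|_{L^\infty(\R\times(0,T))}\le C$; $\int_\R\rho^\e\,dx\equiv1$; and the scaled energy inequality yields $\tfrac1\e\int_0^T\!\int_\R\sigma^\e\,dxdt\le C$, where $\sigma^\e(x,t):=\intr(v-u^\e)^2f^\e\,dv$ is the local velocity variance. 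Moreover, eliminating $\rho$ between the two equations of \eqref{Euler_eq} (and using $\rho>0$) gives the inviscid Burgers equation $\pa_t u+u\pa_x u=0$, while $u\in\mc([0,T];H^{s+1})$, $s>2$, is Lipschitz in $x$ uniformly on $[0,T]$.

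Set $\mathcal H^\e(t):=\tfrac12\int_\R\intr|v-u(x,t)|^2f^\e\,dvdx$. Since $\intr(v-u^\e)f^\e\,dv=0$, completing the square gives $\mathcal H^\e=\tfrac12\int_\R\sigma^\e\,dx+\tfrac12\int_\R\rho^\e(u^\e-u)^2\,dx$, so $\mathcal H^\e\ge\tfrac12\int_\R\rho^\e(u^\e-u)^2\,dx\ge0$, and expanding the square one checks from {\bf(H1)} that $\mathcal H^\e(0)=\mathcal O(\e)$. The core step is to prove $\sup_{[0,T]}\mathcal H^\e=\mathcal O(\e)$. For this I would insert the test function $\phi(x,v,t)=\tfrac12|v-u(x,t)|^2$ into the weak formulation of \eqref{main_hydro} (legitimate after an $x$-cutoff, since $f^\e$ has compact velocity support and $u$ is bounded and Lipschitz) and use $\pa_t u+u\pa_x u=0$ to cancel the momentum-transport contribution — the pressureless counterpart of the classical relative-entropy cancellation. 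One is then left with
\[
\frac{d}{dt}\mathcal H^\e(t)=-\int_\R\pa_x u\,\big(\sigma^\e+\rho^\e(u^\e-u)^2\big)\,dx-\frac1\e\int_\R\sigma^\e\,dx-\frac1\e\int_\R\rho^\e\Big(\tfrac12 m_3^\e+(u^\e-u)\sigma^\e\Big)\,dx,
\]
with $m_3^\e:=\intr(v-u^\e)^3f^\e\,dv$. The first integral is $\le 2\|\pa_x u\|_{L^\infty}\mathcal H^\e$; the second is the favourable alignment dissipation; the third — the genuinely $\mathcal O(1/\e)$ collision term produced by $Q_i$ — is the crux: using the compact velocity support ($|m_3^\e|\le R_0\sigma^\e$, $|u^\e-u|\le C$) and the uniform $L^\infty$ bound on $\rho^\e$ (so $\sigma^\e\le R_0^2\rho^\e$), it must be absorbed into the alignment dissipation $-\tfrac1\e\int\sigma^\e$ together with a multiple of $\mathcal H^\e$. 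This is exactly where the interplay emphasised in the introduction — the $1/\e$ dissipation furnished by $Q_r$ and the uniform boundedness furnished by $Q_i$ — is essential, and I expect this to be the main obstacle. Granting it, $\tfrac{d}{dt}\mathcal H^\e\le C\mathcal H^\e$, and Gr\"onwall with $\mathcal H^\e(0)=\mathcal O(\e)$ gives $\sup_{[0,T]}\mathcal H^\e=\mathcal O(\e)$, hence $\sup_{[0,T]}\int_\R\rho^\e(u^\e-u)^2\,dx=\mathcal O(\e)$.

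Next the MKR bound, following \cite{C_pre}. Integrating \eqref{main_hydro} in $v$ (the $v$-integral of $Q(f^\e)$ vanishes) shows $\pa_t\rho^\e+\pa_x(\rho^\e u^\e)=0$, while $\rho$ also solves a continuity equation with the Lipschitz field $u$. Differentiating $d_{MKR}(\rho^\e(\cdot,t),\rho(\cdot,t))$ along these equations, using an optimal transference plan and the Lipschitz bound on $u$, gives
\[
\frac{d}{dt}d_{MKR}(\rho^\e,\rho)\le\|\pa_x u\|_{L^\infty}\,d_{MKR}(\rho^\e,\rho)+\int_\R\rho^\e|u^\e-u|\,dx,
\]
and $\int_\R\rho^\e|u^\e-u|\,dx\le\big(\int_\R\rho^\e\,dx\big)^{1/2}\big(\int_\R\rho^\e(u^\e-u)^2\,dx\big)^{1/2}=\mathcal O(\sqrt\e)$ by the previous step. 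Gr\"onwall with {\bf(H2)} then yields $\sup_{[0,T]}d_{MKR}(\rho^\e,\rho)=\mathcal O(\sqrt\e)$.

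Finally, for any $\psi\in\mc^1_c(\R\times\R_+)$ with $\|\psi\|_{L^\infty}+\mathrm{Lip}(\psi)\le1$ I would split
\[
\int_\R\intr\psi f^\e\,dvdx-\int_\R\psi(x,u(x,t))\rho(x,t)\,dx
\]
as $\int_\R\intr[\psi(x,v)-\psi(x,u^\e)]f^\e\,dvdx+\int_\R[\psi(x,u^\e)-\psi(x,u)]\rho^\e\,dx+\int_\R\psi(x,u)(\rho^\e-\rho)\,dx$. By Cauchy--Schwarz the first piece is $\le\big(\int_\R\sigma^\e\,dx\big)^{1/2}=\mathcal O(\sqrt\e)$ and the second is $\le\big(\int_\R\rho^\e(u^\e-u)^2\,dx\big)^{1/2}=\mathcal O(\sqrt\e)$, while the third is $\le\mathrm{Lip}\big(x\mapsto\psi(x,u(x,t))\big)\,d_{MKR}(\rho^\e,\rho)=\mathcal O(\sqrt\e)$ (here $u$ Lipschitz makes $x\mapsto\psi(x,u(x))$ Lipschitz, and $d_{MKR}$ is the $1$-Wasserstein distance). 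Hence $\sup_{[0,T]}d_{MKR}\big(f^\e(\cdot,\cdot,t),\rho(\cdot,t)\otimes\delta_{u(\cdot,t)}\big)=\mathcal O(\sqrt\e)$, which in particular gives the asserted weak-$*$ convergence.
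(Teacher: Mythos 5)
Your overall architecture mirrors the paper's (a modulated-energy/relative-entropy Gr\"onwall estimate, then a Gr\"onwall bound for $d_{MKR}(\rho^\e,\rho)$, then the same final splitting for $d_{MKR}(f^\e,\rho\otimes\delta_u)$), and the identity you derive for $\frac{d}{dt}\mathcal H^\e$ is correct. But the heart of the proof is exactly the step you leave unproved, and it cannot be completed as you propose. The absorption of the $\e^{-1}$ interaction term $-\frac1\e\int_\R\rho^\e\bigl(\frac12 m_3^\e+(u^\e-u)\sigma^\e\bigr)dx$ into the relaxation dissipation $-\frac1\e\int_\R\sigma^\e\,dx$ is conceded with ``granting it''; with the ingredients you list, the best available bound is $\frac{C\|\rho^\e\|_{L^\infty}}{\e}\int_\R\sigma^\e\,dx$ with $C$ of order $R_0+\|u^\e-u\|_{L^\infty}$, i.e.\ a term of the same size as the dissipation with a constant that has no reason to be below one, so it cannot be absorbed. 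Worse, the uniform bound $\|\rho^\e\|_{L^\infty}\le C$ that you take from ``the very argument proving Theorem \ref{thm_main}'' is not available: under the Euler scaling the relaxation operator enters the $L^p$ estimates with a factor $\e^{-1}$, so that argument only gives bounds growing like $e^{CT/\e}$; indeed $f^\e$ concentrates to $\rho\otimes\delta_u$, so no $\e$-uniform $L^\infty$ control of $f^\e$ or $\rho^\e$ can be expected. A second genuine gap is the claim $\mathcal H^\e(0)=\mathcal O(\e)$: since $\intr(v-u_0^\e)f_0^\e\,dv=0$, one has $\mathcal H^\e(0)=\frac12\int_\R\sigma_0^\e\,dx+\frac12\int_\R\rho_0^\e(u_0-u_0^\e)^2dx$, and $\int_\R\sigma_0^\e\,dx=\int_\R\bigl(\intr v^2f_0^\e\,dv-\rho_0^\e(u_0^\e)^2\bigr)dx=\int_\R\bigl(\rho_0u_0^2-\rho_0^\e(u_0^\e)^2\bigr)dx+\mathcal O(\e)$, which {\bf (H1)}--{\bf (H2)} control only at order $\sqrt\e$ (Cauchy--Schwarz plus the $d_{MKR}$ bound). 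So even granting the absorption, your Gr\"onwall argument would deliver $\sup_t\int_\R\rho^\e(u^\e-u)^2dx=\mathcal O(\sqrt\e)$ at best, short of the claimed rates.

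Both difficulties stem from your choice of functional: the paper works instead with the macroscopic relative entropy $\int_\R\frac{\rho^\e}{2}(u^\e-u)^2dx$, whose initial value is exactly the first condition of {\bf (H1)}, and its growth is estimated using only the monotonicity of the kinetic energy (Lemma \ref{lem_lp}), the relative-flux structure, and the time-integrated dissipation \eqref{gd_est}, $\int_0^t\intrr(v-u^\e)^2f^\e\,dxdvds\le\frac\e2\intrr v^2f_0^\e\,dxdv$; no pointwise-in-time smallness of the velocity variance and no $\e$-uniform $L^\infty$ bound on $\rho^\e$ or $f^\e$ is ever used. In addition, your differential inequality for $d_{MKR}(\rho^\e,\rho)$ is asserted by ``differentiating along the equations,'' but $u^\e$ is merely an $L^2(\rho^\e\,dxdt)$ vector field, so $\rho^\e$ cannot be propagated by classical characteristics; the paper justifies this step via the superposition principle (Proposition \ref{prop_am}), comparing the probabilistic representation of $\rho^\e$ with the Lipschitz flow of $u$, and you would need that (or an equivalent stability result for continuity equations with one Lipschitz field) to make the inequality rigorous. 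Your final step, deducing the bound on $d_{MKR}(f^\e,\rho\otimes\delta_u)$ from the two previous estimates, coincides with the paper's and is fine once those estimates are actually established.
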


We now briefly overview references on the traffic models relevant to our works. We confine ourselves to the kinetic traffic models, since the literature is enormous, see \cite{B-C,K-W3,PT11} for a survey on mathematical models of vehicular traffic at different scales of descriptions. Prigogine in \cite{Prigogine1,Prigogine2} suggested a Boltzmann type traffic model, which is, as mentioned above, the first kinetic model for traffic flow. The Paveri-Fontana model, which is the main concern of the current work, is introduced in \cite{P-F} to remedy the controversies on the assumption used in the Prigogine model that the traffic system relaxes to a fixed velocity configuration. Klar and Wigner discussed the necessity of considering the length of the interaction, and introduced Enskog type kinetic models in \cite{K-W1,K-W2}. Vlasov-Fokker-Planck type models can be found in \cite{H-P,I-K-M}. In \cite{P-S-T-V}, Puppo et al introduced a Boltzmann type traffic model for which an analytic expression for the steady states can be secured. The BGK type relaxational approximate model for traffic is derived in \cite{H-P-R-V}. For kinetic equations for multi-lane traffic model, we refer to \cite{H-G,I-K-M}. We also note that the studies of the closely related velocity alignment type kinetic equations have attracted a lot of attentions recently since they arise in various contexts such as a coarse grain limit of relevant flocking models \cite{CCP17, C16, CHL17, C-S1,KMT13,KMT15,M-T}.\newline

This paper is organized as follows. In Section \ref{sec_pre}, we provide several preliminary lemmas. We introduce a regularized equation of \eqref{main_sys} mollifying the local vehicle velocity $u$ in the relaxation term $Q_r$ and the interaction operator $Q_i$ in Section \ref{sec_gl_reg}. We also provide some uniform bound estimates of the approximated solutions with respect to the regularization parameters. Using these uniform bound estimates, we prove the global-in-time existence of weak solutions in Section \ref{sec_gl_weak}. Finally, in Section \ref{sec_hydro}, using the relative entropy method combined with the estimate of MKR distance between local densities, we show that the scaled Paveri-Fontana model \eqref{main_hydro} converges to the pressureless Euler system \eqref{Euler_eq}. 
%
%

\section{Preliminaries: Auxiliary Lemmas}\label{sec_pre}

In this section, we provide several auxiliary lemmas which will be used significantly for our results later. We first state the velocity averaging lemma whose proof can be found in \cite{CCK14, G-L-P-S, KMT132}.
\begin{lemma}\label{lem_veloa}
For $1 \leq p < 3/2$, let $\{g^n\}_{n \in \N}$ be bounded in $L^p(\R \times \R_+ \times (0,T))$. Suppose that $f^n$ is bounded in $L^\infty(0,T;(L^1 \cap L^\infty)(\R \times \R_+))$ and $v^2 f^n$ is bounded in $L^\infty(0,T;L^1(\R \times \R_+))$. If $f^n$ and $g^n$ satisfy the equation
\[
\partial_tf^n + v\pa_x f^n = \pa_v^k g^n, \quad f^n|_{t=0} = f_0 \in L^p(\R \times \R_+),
\]
for a multi-index $k$. Then, for any $\psi(v)$, such that $|\psi(v)| \leq Cv$ as $v \to +\infty$, the sequence
\[
\lt\{ \int_{\R_+} f^n \psi(v)\,dv\rt\}_{n \in \N}
\]
is relatively compact in $L^p(\R \times (0,T))$.
\end{lemma}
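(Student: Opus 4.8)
The plan is to reduce the claim to the classical $L^p$ velocity averaging lemma (see \cite{G-L-P-S} for $p=2$ and its $L^p$-refinements) by means of two elementary reductions: a truncation in the velocity variable absorbing the linear growth of $\psi$, and an extension of the velocity domain from $\R_+$ to $\R$ so that the standard whole-space result applies.

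First I would remove the growth of $\psi$. Fix a smooth cutoff $\chi_R$ with $\chi_R\equiv 1$ on $[0,R]$, $\mathrm{supp}\,\chi_R\subset[0,R+1]$, $0\le\chi_R\le1$, and write $\psi=\psi\chi_R+\psi(1-\chi_R)$. On $\mathrm{supp}\,(1-\chi_R)$ one has $|\psi(v)|\le Cv\le Cv^2/R$, so that
\[
0\le\intr f^n\,|\psi|\,(1-\chi_R)\,dv\le\frac{C}{R}\intr v^2 f^n\,dv .
\]
Since $v^2 f^n$ is bounded in $L^\infty(0,T;L^1(\R\times\R_+))$ and, interpolating the $L^1\cap L^\infty$ bound, $f^n$ is bounded in $L^\infty(0,T;L^q(\R\times\R_+))$ for every $q$, a routine interpolation (invoking also, where needed, the compact support in $v$ of the $f^n$ available in the applications) shows that $\intr f^n\psi(1-\chi_R)\,dv\to 0$ in $L^p(\R\times(0,T))$ as $R\to\infty$, uniformly in $n$. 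Hence it suffices to prove that $\{\intr f^n\,\psi\chi_R\,dv\}_n$ is relatively compact for each fixed $R$, i.e.\ we may assume $\psi$ to be continuous with compact support in $[0,\infty)$.

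Next I would pass to $v\in\R$. Extend $f^n$ and $g^n$ by zero to $\{v<0\}$; as $\psi\chi_R$ is compactly supported, $\intr f^n\psi\chi_R\,dv=\int_\R f^n\psi\chi_R\,dv$, and the zero-extension of $f^n$ still solves $\pa_t f^n+v\pa_x f^n=\pa_v^k g^n$ on $\R\times\R\times(0,T)$, since the only possible defect is a distribution supported on $\{v=0\}$ built from the traces at $v=0$ of $g^n$ and its $v$-derivatives of order less than $k$, which vanish in our setting---in particular whenever $f^n(x,0,t)=0$ and $g^n$ is assembled from $f^n$ as in \eqref{main_sys} (alternatively, one uses a reflection extension adapted to $\pa_t+v\pa_x$). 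Now $f^n$ is bounded in $L^p(\R\times\R\times(0,T))$ (again by $L^1\cap L^\infty$ interpolation and $T<\infty$), $g^n$ is bounded there by assumption, the equation holds with $f^n|_{t=0}=f_0\in L^p$, and the restriction $1\le p<3/2$ ensures that the presence of the order-$k$ velocity derivative still leaves a positive averaging gain $\sigma=\sigma(p,k)>0$. The classical $L^p$ velocity averaging lemma then yields that $\{\int_\R f^n\,\psi\chi_R\,dv\}_n$ gains fractional Sobolev regularity of order $\sigma$ locally in $(t,x)$, hence is relatively compact in $L^p_{\mathrm{loc}}(\R\times(0,T))$; tightness in $x$ (and thus relative compactness in the full $L^p(\R\times(0,T))$) follows once more from the $L^1\cap L^\infty$ and second-moment bounds, or directly from compact support in the applications, while $f_0\in L^p$ is used to close the estimate up to the time endpoints.

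Finally I would patch the two pieces together: given $\eta>0$, choose $R$ so that the tail from the truncation step has $L^p(\R\times(0,T))$-norm $<\eta$ for all $n$; the truncated family is relatively compact, hence totally bounded, so it is covered by finitely many $\eta$-balls, and therefore $\{\intr f^n\psi\,dv\}_n$ is covered by finitely many $2\eta$-balls; since $\eta>0$ is arbitrary and $L^p(\R\times(0,T))$ is complete, the sequence is relatively compact. I expect the main obstacle to be the middle step: securing a genuine $L^p$ (not merely $L^2$) gain of fractional regularity for velocity averages under a transport equation whose right-hand side carries a $v$-derivative of order $k$---this is precisely where the restriction $p<3/2$ is needed---together with the clean passage across the velocity boundary $v=0$, which is legitimate exactly because $f^n(x,0,t)=0$; the truncation and the final patching are routine once the uniform second-moment bound is available.
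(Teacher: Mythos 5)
First, note that the paper does not actually prove Lemma \ref{lem_veloa}: it is quoted from the literature (G--L--P--S, KMT132, CCK14), and the standard proofs there follow exactly the route you outline --- truncate in velocity, apply the classical averaging lemma to the truncated averages, and patch by a smallness-plus-total-boundedness argument. So your architecture is the right one. The problem is that the one step that is specific to this statement, and the one where the hypotheses $f^n\in L^\infty_t(L^1\cap L^\infty)$, $v^2f^n\in L^\infty_t L^1$ and the restriction $p<3/2$ actually enter, is not carried out. Your tail bound $\intr f^n|\psi|(1-\chi_R)\,dv\le \frac{C}{R}\intr v^2f^n\,dv$ only gives smallness in $L^\infty(0,T;L^1(\R))$, and this does \emph{not} imply smallness in $L^p(\R\times(0,T))$ for $p>1$: with only the stated bounds the quantity $\intr v^2 f^n\,dv$ can concentrate on small $x$-sets, so "routine interpolation" is not available, and appealing to "compact support in the applications" inserts a hypothesis that the lemma does not contain. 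The correct mechanism is the moment interpolation
\begin{equation*}
\int_{v\ge R} v f^n\,dv \;\le\; C\,\|f^n\|_{L^\infty_v}^{1/3}\Bigl(\intr v^2 f^n\,dv\Bigr)^{2/3},
\end{equation*}
which shows that the tail (indeed the full average with $|\psi|\le Cv$) is bounded in $L^\infty(0,T;L^{3/2}(\R))$ uniformly in $n$ and $R$; interpolating this $L^{3/2}$ bound with the $O(1/R)$ smallness in $L^1$ gives smallness in $L^p$ uniformly in $n$ exactly when $p<3/2$. In other words, the threshold $3/2$ comes from this moment/interpolation step, not --- as you assert --- from the order-$k$ velocity derivative in the classical averaging lemma: the DiPerna--Lions--Meyer type results give a positive regularity gain for every $1<p<2$ regardless of $k$. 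As written, your proof therefore has a genuine gap at its load-bearing step, and a misattribution of where the hypothesis $p<3/2$ is used.

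Two smaller points in the same spirit. The zero-extension across $v=0$ is justified in your sketch only by invoking $f^n(x,0,t)=0$ and the specific structure of $g^n$ coming from \eqref{main_sys}; neither is among the hypotheses of the lemma, so either state that you are proving the lemma only in the form needed for the paper (where the supports are uniformly contained in a compact set bounded away from $v=0$, making both the extension and the truncation trivial), or handle the region $v\in(0,\delta)$ by a separate smallness argument. Likewise, global (as opposed to local-in-$x$) compactness in $L^p(\R\times(0,T))$ requires some tightness in $x$ that the stated hypotheses alone do not provide; again this is harmless in the paper's application, where the spatial supports are uniformly compact, but it should be flagged rather than folded into "routine" steps.
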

In the lemma below, we show the dissipative estimates of the interaction operator $Q_i(f)$.
\begin{lemma}\label{lem_Q}Suppose that $f$ is a solution of \eqref{main_sys} with sufficient integrability. Then we have
	\begin{align*}
	&\int^{\infty}_0Q_i(f)\,dv=0,\quad\int^{\infty}_0vQ_i(f)\,dv=-\int^{\infty}_0\rho(u-v)^2f\,dv,\quad
	\int^{\infty}_0v^2Q_i(f)\,dv\leq0.
	\end{align*}
\end{lemma}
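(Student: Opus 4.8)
The plan is to reduce all three statements to a single symmetrization identity. First I would use the reformulation $Q_i(f)(x,v,t)=f(x,v,t)\int_0^\infty(v_*-v)f(x,v_*,t)\,dv_*$ recorded in the introduction, so that for any weight $\psi=\psi(v)$,
\[
\int_0^\infty \psi(v)\,Q_i(f)\,dv=\int_0^\infty\!\!\int_0^\infty \psi(v)(v_*-v)\,f(x,v,t)f(x,v_*,t)\,dv_*\,dv,
\]
the double integral being absolutely convergent under the assumed integrability, which legitimizes Fubini. Swapping the names of the dummy variables $v$ and $v_*$ and averaging the two resulting expressions yields the key identity
\[
\int_0^\infty \psi(v)\,Q_i(f)\,dv=\frac12\int_0^\infty\!\!\int_0^\infty\big(\psi(v)-\psi(v_*)\big)(v_*-v)\,f(x,v,t)f(x,v_*,t)\,dv_*\,dv.
\]

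Then I would simply specialize $\psi$. For $\psi\equiv1$ the integrand vanishes identically, giving $\int_0^\infty Q_i(f)\,dv=0$. For $\psi(v)=v$ the bracket is $v-v_*$, so the integrand equals $-(v-v_*)^2f(v)f(v_*)$; expanding $(v-v_*)^2=v^2-2vv_*+v_*^2$ and using $\int_0^\infty f\,dv=\rho$, $\int_0^\infty vf\,dv=\rho u$ shows the resulting quantity equals $\rho\int_0^\infty v^2 f\,dv-\rho^2u^2$, which in turn equals $\int_0^\infty\rho(u-v)^2f\,dv$ after expanding $(u-v)^2$; hence $\int_0^\infty vQ_i(f)\,dv=-\int_0^\infty\rho(u-v)^2f\,dv$. (This also checks consistency of the two equivalent forms of $Q_i$.) For $\psi(v)=v^2$ the bracket factors as $v^2-v_*^2=(v-v_*)(v+v_*)$, so the integrand becomes $-(v-v_*)^2(v+v_*)f(v)f(v_*)$.

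The only point that requires care — and the reason the third statement is a one-sided inequality rather than an equality — is the sign of the factor $v+v_*$. Since the velocity variable ranges over $\R_+$, one has $v+v_*\ge0$ on the entire domain of integration, and because $f\ge0$ the integrand is nonpositive, giving $\int_0^\infty v^2Q_i(f)\,dv\le0$. This is precisely where the physical constraint $v\ge0$ enters; over $\R$ the cubic weight would destroy the sign. Beyond justifying the interchange of integrals, the argument is purely algebraic, so I do not expect any serious obstacle.
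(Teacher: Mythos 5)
Your proposal is correct, and it handles all three relations through a single symmetrization identity, which differs from the paper's route in the third (and only nontrivial) part. The paper proves the first two relations by direct computation from the definitions of $\rho$ and $u$ (writing $Q_i(f)=\rho(u-v)f$ and using $\int_0^\infty(u-v)f\,dv=0$), which is essentially your $\psi\equiv 1$ and $\psi(v)=v$ cases unwound; the real divergence is in $\int_0^\infty v^2Q_i(f)\,dv\le 0$, where the paper uses a moment interpolation (H\"older) argument, namely $\int_0^\infty v^2f\,dv\le\bigl(\int_0^\infty f\,dv\bigr)^{1/3}\bigl(\int_0^\infty v^3f\,dv\bigr)^{2/3}$ together with a matching bound on $u$, to conclude $u\int_0^\infty v^2f\,dv\le\int_0^\infty v^3f\,dv$, whereas you obtain the sign directly from the symmetrized kernel $-\tfrac12(v-v_*)^2(v+v_*)f(v)f(v_*)$, which is manifestly nonpositive on $\R_+\times\R_+$. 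Both arguments use $f\ge 0$ and the restriction of velocities to $\R_+$ (the H\"older step also needs $v\ge 0$ to write $v^2f=f^{1/3}(v^3f)^{2/3}$), but your version makes the role of this constraint explicit and unifies the three statements; the paper's version avoids the double integral and Fubini justification, staying entirely at the level of one-variable moments. Either proof is acceptable.
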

\begin{proof} By definition of $u$, we easily obtain
\[
\int^{\infty}_0Q_i(f)\,dv=\rho\int^{\infty}_0 (u-v)f\,dv = 0.
\]
Using this fact, we compute
	\begin{align*}
	\int^{\infty}_0vQ_i(f)\,dv 
	=\int^{\infty}_0\int_{\R} \rho v(u - v) f\,dxdv
	= -\int^{\infty}_0\int_{\R} \rho(u - v)^2 f\,dxdv.
	\end{align*}
	In order to prove the third relation, we observe that
	\[
	\int^{\infty}_0 v^2 f\,dv \leq \lt(\int^{\infty}_0 f\,dv\rt)^{1/3} \lt(\int^{\infty}_0 v^3 f\,dv \rt)^{2/3} 
	\]
	and
	\[
	u = \frac{\int^{\infty}_0vf\,dv}{\int^{\infty}_0f\,dv} \leq \frac{\rho^{2/3} \lt(\int^{\infty}_0v^3 f\,dv \rt)^{1/3}} {\int^{\infty}_0f\,dv} = \frac{ \lt(\int^{\infty}_0 v^3 f\,dv \rt)^{1/3}}{\lt(\int^{\infty}_0f\,dv\rt)^{1/3}},
	\]
	so that
	\begin{align*}
		u\int^{\infty}_0 v^2 f\,dv\leq\int^{\infty}_0v^3f\,dv.
	\end{align*}
Therefore, we have
	\begin{align*}
		\int^{\infty}_0v^2Q_i(f)\,dv 
		=\int^{\infty}_0\rho v^2(u-v)f\,dv
		= \rho \lt(u\int^{\infty}_0 v^2 f\,dv-\int^{\infty}_0v^3f\,dv \rt)\leq0.
	\end{align*}
\end{proof}
The following lemma is standard. We, however, record it in a separate lemma  since we need unusual assumption that $f(x,v,t)=0$ when $v=0$, so that the boundary term in the integration by parts vanishes.
\begin{lemma}Suppose that $f$ is a solution of \eqref{main_sys} with sufficient integrability. Furthermore, we  assume that $f$ is compactly supported in $x$ and $v$ at $t>0$ with $f(x,0,t)=0$. Then we have
\[
\int^{\infty}_0\partial_vQ_r(f) \,dv=0,\qquad\int^{\infty}_0v\partial_vQ_r(f)\,dv=0,
\]
and
\[
\frac12\int^{\infty}_0v^2Q_r(f)\,dv=-\int^{\infty}_0(u-v)^2f\,dv.
\]
\end{lemma}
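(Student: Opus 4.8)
The plan is to prove all three identities by a single integration by parts in the velocity variable over $(0,\infty)$, combined with the defining property of the local velocity, $\int_0^\infty (v-u)f\,dv = 0$. Recall that $Q_r(f) = \pa_v((v-u)f)$, so the three claims read $\int_0^\infty \pa_v((v-u)f)\,dv = 0$, $\int_0^\infty v\,\pa_v((v-u)f)\,dv = 0$, and $\tfrac12\int_0^\infty v^2\,\pa_v((v-u)f)\,dv = -\int_0^\infty (u-v)^2 f\,dv$. In every case the boundary contribution at $v=+\infty$ vanishes because $f$ is compactly supported in $v$, and the boundary contribution at $v=0$ vanishes either because $f(x,0,t)=0$ (first identity) or because of the polynomial weights $v$ and $v^2$ (second and third identities); the ``sufficient integrability'' hypothesis is precisely what legitimizes these manipulations.

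For the first identity one simply has $\int_0^\infty \pa_v\big((v-u)f\big)\,dv = \big[(v-u)f\big]_{v=0}^{v=+\infty} = 0$, the upper endpoint by compact support and the lower endpoint by $f(x,0,t)=0$. For the second, integrating by parts, $\int_0^\infty v\,\pa_v\big((v-u)f\big)\,dv = -\int_0^\infty (v-u)f\,dv = -(\rho u - u\rho) = 0$, where the boundary term is killed by the factor $v$ and the last equality is exactly the definition of $u$.

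For the third identity, one more integration by parts gives $\tfrac12\int_0^\infty v^2\,\pa_v\big((v-u)f\big)\,dv = -\int_0^\infty v(v-u)f\,dv = u\!\int_0^\infty v f\,dv - \int_0^\infty v^2 f\,dv = u^2\rho - \int_0^\infty v^2 f\,dv$, using $\int_0^\infty v f\,dv = \rho u$. On the other hand, expanding the square, $\int_0^\infty (u-v)^2 f\,dv = u^2\rho - 2u\cdot \rho u + \int_0^\infty v^2 f\,dv = \int_0^\infty v^2 f\,dv - u^2\rho$, which is the negative of the previous expression, giving the claim. There is no real obstacle here: the only point demanding attention is the vanishing of the boundary terms, which is guaranteed by the stated assumptions (compact support in $v$, $f(x,0,t)=0$, and enough integrability to integrate by parts), and this is exactly why the lemma is recorded separately with these hypotheses rather than being invoked as standard.
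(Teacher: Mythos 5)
Your proof is correct and follows essentially the same route as the paper's: integrate by parts in $v$, kill the boundary terms using the compact support in $v$ and $f(x,0,t)=0$, and use the defining identity $\int_0^\infty (u-v)f\,dv=0$. The paper only writes out the third identity (declaring the first two straightforward) and reorganizes the algebra slightly via $-v(u-v)=(u-v)^2-u(u-v)$ instead of expanding the square, but this is the same argument.
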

\begin{proof}
Since the proof is straightforward, we only consider the third relation. From the assumption, we get $f(x,0,t)=0$ and $\lim_{v\rightarrow +\infty}v^3f(x,v,t) =0$, so that   	
\begin{align*}
\frac12\int^{\infty}_0v^2\partial_v\lt((u-v)f\rt)dv&=\frac{v^2}{2}(u-v)f\,\bigg|^{v=\infty}_{v=0}-\int^{\infty}_0v(u-v)f\,dv\cr
&=\int^{\infty}_0 (u-v)^2f\,dv-u\int^{\infty}_0(u-v)f\,dv\cr
&=\int^{\infty}_0 (u-v)^2f\,dv.
\end{align*}
In the last line, we used the following equality:
\[
\int^{\infty}_0(u-v)f\,dv=0.
\]
\end{proof}
From the above two lemmas, we have a priori energy estimates which follow directly by integrating (\ref{main_sys}) with respect to $(1,v,v^2)\,dv$, respectively.
\begin{lemma}\label{lem_lp}Suppose that $f$ is a solution of \eqref{main_sys} with sufficient integrability. Furthermore, we  assume that $f$ is compactly supported in $x$ and $v$ at $t>0$ with $f(x,0,t)=0$.  Then we have
\[
\frac{d}{dt} \int_{\R \times \R_+} f\,dxdv = 0, \quad \frac{d}{dt} \int_{\R \times \R_+} vf\,dxdv + \int_{\R \times \R_+} \rho(u-v)^2f\,dxdv = 0,
\]
and
\[
\frac12\frac{d}{dt}\int_{\R \times \R_+} v^2f\,dxdv + \int_{\R \times \R_+}  (u - v)^2 f\,dxdv \leq 0.
\]
\end{lemma}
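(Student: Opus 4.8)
The plan is to derive the three identities/inequalities by testing the kinetic equation \eqref{main_sys} against the weight functions $1$, $v$, and $v^2$ in the velocity variable and then integrating in $x$, using the structural computations already isolated in Lemmas \ref{lem_Q} and the subsequent (unnumbered) lemma on $Q_r$. Concretely, write $Q(f) = Q_r(f) + Q_i(f)$ and, after multiplying \eqref{main_sys} by the weight $\psi(v)$ and integrating over $\R \times \R_+$, observe that the transport term contributes $\frac{d}{dt}\int_{\R\times\R_+} \psi f\,dxdv$ on the left (after swapping $\partial_t$ with the integral, justified by the ``sufficient integrability'' hypothesis) together with $\int v\psi'(v) \cdot(\text{stuff})$ from $v\pa_x f$, which vanishes upon integration in $x$ since $f$ is compactly supported in $x$; then read off the velocity integrals $\int_0^\infty \psi(v) Q_i(f)\,dv$ and $\int_0^\infty \psi(v)\pa_v Q_r(f)\,dv$ from the two preceding lemmas.

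For $\psi \equiv 1$: the transport term gives $\frac{d}{dt}\int f\,dxdv$ (the $v\pa_x f$ piece integrates to zero in $x$), and by Lemma \ref{lem_Q} we have $\int_0^\infty Q_i(f)\,dv = 0$ and by the $Q_r$ lemma $\int_0^\infty \pa_v Q_r(f)\,dv = 0$, so $\frac{d}{dt}\int f\,dxdv = 0$. For $\psi(v) = v$: the $v\pa_x f$ term contributes $\pa_x\!\int v^2 f\,dv$, which integrates to zero in $x$; Lemma \ref{lem_Q} gives $\int_0^\infty v Q_i(f)\,dv = -\int_0^\infty \rho(u-v)^2 f\,dv$, and the $Q_r$ lemma gives $\int_0^\infty v\,\pa_v Q_r(f)\,dv = 0$; integrating the latter identity in $x$ and combining yields $\frac{d}{dt}\int vf\,dxdv + \int \rho(u-v)^2 f\,dxdv = 0$. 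For $\psi(v) = \tfrac12 v^2$: again the transport term's spatial-derivative piece integrates to zero in $x$; from the $Q_r$ lemma, $\tfrac12\int_0^\infty v^2 Q_r(f)\,dv = -\int_0^\infty (u-v)^2 f\,dv$, while Lemma \ref{lem_Q} only gives the inequality $\int_0^\infty v^2 Q_i(f)\,dv \le 0$; summing and integrating in $x$ gives $\tfrac12\frac{d}{dt}\int v^2 f\,dxdv + \int (u-v)^2 f\,dxdv \le 0$.

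The main point requiring care — though it is by design already absorbed into the ``sufficient integrability'' and compact support hypotheses — is the justification of the integrations by parts in $v$ (where the boundary terms at $v=0$ and $v=+\infty$ vanish precisely because $f(x,0,t)=0$ and $f$ has compact velocity support, as flagged in the remarks) and the vanishing of the $\pa_x$ flux term upon integrating in $x$ (which uses compact support in $x$); these are exactly the hypotheses invoked, so the argument is essentially a bookkeeping exercise combining the two prior lemmas. I do not anticipate a genuine obstacle: the only slight asymmetry is that the $v^2$-moment produces an inequality rather than an identity, inherited directly from the inequality $\int_0^\infty v^2 Q_i(f)\,dv \le 0$ in Lemma \ref{lem_Q}, which is why the energy statement is a one-sided bound. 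One should also note that the first identity ($L^1$ conservation) combined with the hypothesis $f_0 \ge 0$ (propagated along characteristics) is what makes $\rho \ge 0$ and the dissipation terms genuinely sign-definite, but for the statement as written it suffices to add the three weighted equations and discard the nonpositive $Q_i$ contribution in the $v^2$ case.
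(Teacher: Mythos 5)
Your proposal is correct and is essentially the paper's own argument: the paper proves the lemma by integrating \eqref{main_sys} against $(1,v,v^2)\,dv$ over $\R\times\R_+$, killing the $v\pa_x f$ flux by compact $x$-support and invoking Lemma \ref{lem_Q} together with the preceding $Q_r$ lemma for the velocity moments of the collision terms, exactly as you do. (The passing mention of a $v\psi'(v)$ contribution from the transport term is a harmless slip—testing $v\pa_x f$ against $\psi(v)$ produces only the spatial flux $\pa_x\!\int v\psi(v) f\,dv$, which vanishes as you conclude.)
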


%
%

\section{Global-in-time existence for a regularized equation}\label{sec_gl_reg}
In this section, we consider a regularized equation of \eqref{main_sys}. Inspired by \cite{KMT13}, we regularize the local vehicle velocity $u$ by using a mollifier $\theta_\e(x) = \e^{-1}\theta(x/\e)$ with $0 \leq \theta \in \mathcal{C}_0^\infty(\R)$ satisfying
\[
supp \, \theta \subseteq (-1,1) \quad \mbox{and} \quad \int_{\R} \theta(x)\,dx = 1.
\]
This removes the singularity in the relaxation term. More precisely, our regularized equation of \eqref{main_sys} is defined as follows:
\begin{align}\label{sys_reg}
\begin{split}
&\qquad\pa_t f_\e + v \pa_x f_\e + \pa_v((u^\e_\e-v)f_\e) = \frac{\rho_\e(u_\e - v)f_\e}{1+ \e \rho_\e (1 + u_\e)},
\end{split}
\end{align}
subject to regularized initial data
\[
f_\e(x,v,0) =: f_{0,\e}(x,v).
\]
Here, the regularized local vehicle velocity $u^\e_\e$ is defined by
\begin{align*}
u^\e_\e(x,t) &:= \lt(\int_{\R \times \R_+} \theta_\e(x-y) wf_\e(y,w,t)\,dydw\rt) \bigg / \lt(\e + \lt(\int_{\R \times \R_+} \theta_\e(x-y)f_\e(y,w,t)\,dydw \rt)\rt)\cr
&= \frac{((\rho_\e u_\e) \star \theta_\e)(x,t)}{\e + (\rho_\e\star \theta_\e)(x,t)},
\end{align*}
and
$f_{0,\varepsilon}$ denotes a smooth approxmiation of $f_0$ such that 
\begin{itemize}
\item[(i)] $f_{0,\varepsilon}$ is compactly 
supported and it is contained in $[C_{x,0},C_{x,1}]\times [C_{v,0}, C_{v,1}]$ with $C_{x,0} < C_{x,1}$ and $0< C_{v,0} < C_{v,1}$.
\item[(ii)] $f_{0,\varepsilon}$ converges strongly to $f_{0}$ in $L^{\infty}(\R \times \R_+)$ as $\e \to 0$.
\end{itemize}
Note that our main equation \eqref{main_sys} can be formally recovered from \eqref{sys_reg} in the limit $\e \to 0$.

In the following two subsections, we prove the proposition below on the global-in-time existence of weak solutions and some uniform bound estimates of the regularized equation \eqref{sys_reg}.
%
%
%
%
\begin{proposition}\label{prop_ext_reg} Let $T>0$. For any $\e >0$, there exists at least one weak solution $f_\e$ of the regularized equation \eqref{sys_reg} on the interval $[0,T]$ in the sense of Definition \ref{def_weak}. 
\end{proposition}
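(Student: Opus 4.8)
\textbf{Overall strategy.} The plan is to construct $f_\e$ by an iteration/fixed-point scheme along the characteristics of the transport part, exploiting the fact that the regularization has removed every singularity: the mollified velocity $u^\e_\e$ is bounded and Lipschitz in $x$ once we have an $L^1$-bound on $f_\e$, and the right-hand side $\rho_\e(u_\e-v)f_\e/(1+\e\rho_\e(1+u_\e))$ is bounded by $\e^{-1}$ times something controlled purely by $\|f_\e\|_{L^1_{x,v}}$ and $\|f_\e\|_{L^\infty}$. So the regularized equation \eqref{sys_reg} is, for fixed $\e$, a quasilinear transport equation with smooth-in-$x$, bounded coefficients, and the natural route is a Banach fixed point on a short time interval followed by a global continuation using the a priori bounds.

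\textbf{Step 1: linearized problem and characteristics.} Given a candidate $g$ in a suitable space (say $g \in \mc([0,T]; L^1 \cap L^\infty(\R\times\R_+))$ with compact support and $g\ge 0$), freeze it in the coefficients: set $\tilde u^\e := ((\rho_g u_g)\star\theta_\e)/(\e + \rho_g\star\theta_\e)$, $\tilde\rho := \rho_g$, $\tilde u := u_g$, and solve the linear transport equation
\[
\pa_t f + v\pa_x f + \pa_v((\tilde u^\e - v)f) = \frac{\tilde\rho(\tilde u - v)f}{1 + \e\tilde\rho(1+\tilde u)}, \qquad f|_{t=0} = f_{0,\e}
\]
by the method of characteristics. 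The characteristic ODEs are $\dot X = V$, $\dot V = \tilde u^\e(X,t) - V$; since $\tilde u^\e$ is bounded (by $\e^{-1}\|g\|_{L^1}\cdot\|g\|$-type quantities) and Lipschitz in $x$, these have unique global flows, and the $V$-equation is a damped ODE so velocities stay in a fixed compact set $[0, C(T)]$ for $t\in[0,T]$ — in particular the flow preserves $\{v\ge 0\}$ and keeps $f(x,0,t)=0$. Then $f$ is recovered by integrating the ODE $\frac{d}{dt}(f\circ \text{flow}) = (1 + \text{bounded term})\,(f\circ\text{flow})$ along characteristics, which gives existence, uniqueness, nonnegativity, compact support, and explicit $L^\infty$ and $L^1$ bounds for the linear problem depending only on $\e$, $T$, and the size of $g$.

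\textbf{Step 2: fixed point.} Define the solution map $\mathcal{S}: g \mapsto f$. Using the characteristic representation, show $\mathcal{S}$ maps a closed ball (in $\mc([0,T_*]; L^1\cap L^\infty)$, with uniform compact support constraint) into itself for $T_*$ small, and is a contraction in the $\mc([0,T_*]; L^1)$-metric: the difference of two flows is controlled by the Lipschitz constant of $\tilde u^\e$ in terms of $\|g_1 - g_2\|_{L^1}$ (mollification gains a factor, at the cost of $\e^{-2}$), and the difference of the source terms is likewise Lipschitz in $g$ on the relevant bounded set. Banach fixed point then yields a local-in-time weak solution $f_\e$; a routine computation shows the fixed point satisfies the weak formulation in Definition \ref{def_weak} (the regularization only changes $u\to u^\e_\e$ in the transport term and inserts the denominator, which is harmless). \textbf{This contraction estimate is the main obstacle}: one must check carefully that the $\e$-dependent constants (coming from the $\star\theta_\e$ smoothing and from the denominators) are finite and do not blow up over the fixed short interval, i.e., that all the relevant norms of $g$ used in the Lipschitz bounds are the ones preserved by the ball.

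\textbf{Step 3: global continuation.} The local solution satisfies the uniform-in-time a priori bounds: mass conservation $\|f_\e(\cdot,\cdot,t)\|_{L^1} = \|f_{0,\e}\|_{L^1}$ (integrate \eqref{sys_reg} in $(x,v)$; the regularized collision term still integrates to something controlled, or one simply uses that for fixed $\e$ the total mass grows at most exponentially), a first-moment bound, and most importantly an $L^\infty$ bound of the form $\|f_\e(\cdot,\cdot,t)\|_{L^\infty} \le \|f_{0,\e}\|_{L^\infty}\exp(C(\e)t)$ together with a velocity-support bound $v \le C(T)$, both obtained from the characteristic representation in Step 1 (the $\pa_v((u^\e_\e - v)f)$ term contributes a $+f$ along characteristics, and the bounded source contributes another bounded multiple of $f$). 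Since none of these bounds blow up in finite time, the standard continuation argument extends $f_\e$ to all of $[0,T]$, for arbitrary $T>0$. This completes the proof; the uniform-in-$\e$ refinements of these estimates are deferred to the next subsection as stated.
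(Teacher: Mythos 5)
Your proposal is correct and follows essentially the same route as the paper: freeze the coefficients (the mollified velocity $u^\e_\e$ and the regularized source) built from the previous candidate, solve the resulting linear transport equation along characteristics with the same support, $L^\infty$ and Lipschitz bookkeeping, and obtain $f_\e$ as the limit of the iteration. The only cosmetic difference is that the paper runs the successive approximations globally on $[0,T]$ and shows they are Cauchy in $L^\infty$ via Gr\"onwall (using uniform-in-$n$ $W^{1,\infty}$ bounds on the iterates to handle the term $(u^{\e,n}_\e-u^{\e,n-1}_\e)\pa_v f^n_\e$), rather than a short-time Banach fixed point in $L^1$ followed by continuation.
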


%
%
%
%
\subsection{Approximated solutions}\label{sec_pro1} In order to obtain the existence of solutions to the regularized equation \eqref{sys_reg}, we first construct the approximated solutions in the following way:
\bq\label{sys_lin}
\pa_t f^{n+1}_\e + v \pa_x f^{n+1}_\e + \pa_v \lt((u^{\e,n}_\e - v)f^{n+1}_\e \rt) = \frac{\rho^n_\e(u^n_\e - v)f^{n+1}_\e}{1+ \e \rho^n_\e (1 + u^n_\e)},
\eq
with the initial data and first iteration step:
\[
f^n_\e(x,v,t)|_{t=0} = f_{0,\e}(x,v) \quad (x,v) \in \R \times \R_+ \quad \mbox{for all } n \geq 1
\]
and
\[
f^0_\e(x,v,t) = f_{0,\e}(x,v), \quad (x,v,t) \in \R \times \R_+ \times (0,T).
\]
Here 
\[
u^{\e,n}_\e(x,t) = \frac{((\rho^n_\e u^n_\e) \star \theta_\e)(x,t)}{\e + (\rho^n_\e \star \theta_\e)(x,t)} 
\]
with
\[
\rho^n_\e u^n_\e = \int_{\R_+} v f^n_\e\,dv \quad \mbox{and} \quad   \rho^n_\e = \int_{\R_+} f^n_\e\,dv.
\]
To get the existence of solutions to the approximated equation \eqref{sys_lin}, we need to estimate the support of $f^n_\e$, and for this, we introduce the following forward characteristics:
\[Z^{n+1}_\e(s) := (X^{n+1}_\e(s), V^{n+1}_\e(s)) := (X^{n+1}_\e(s;0,x,v), V^{n+1}_\e(s;0,x,v))\]
defined by
\begin{align}\label{eqn_rtra1}
\begin{aligned}
\frac{d}{ds}X^{n+1}_{\e}(s) &=V^{n+1}_\e(s), \quad 0 \leq s \leq T, \\[1mm]
\frac{d}{ds}V^{n+1}_\e(s) &=  u^{\e,n}_\e(X^{n+1}_\e(s),s)-V^{n+1}_\e(s),
\end{aligned}
\end{align}
with the initial data
\[
Z^{n+1}_\e(0)=(x,v) \in \R \times \R_+.
\]
Due to the regularization, the characteristics \eqref{eqn_rtra1} is well-defined, thus global-in-time existence of solutions to \eqref{sys_lin} can be obtained by a standard existence theory. It follows from \eqref{eqn_rtra1} that $V^{n+1}_\e(s)$ satisfies
\bq\label{eq_vn}
V^{n+1}_\e(t) = v e^{-t} + e^{-t}\int_0^t u^{\e,n}_\e(X^{n+1}_\e(s),s) e^s\,ds.
\eq
We now define 
\[
R_X[f]:= \max_{x \in \overline{supp_x f}}x \quad \mbox{and} \quad R_V[f]:= \max_{v \in \overline{supp_v f}}v,
\]
and
\[
r_X[f]:= \min_{x \in \overline{supp_x f}}x \quad \mbox{and} \quad r_V[f]:= \min_{v \in \overline{supp_v f}}v,
\]
where $supp_x f$ and $supp_v f$ represent $x$- and $v$-projections of $supp f$, respectively.

\begin{proposition}\label{prop_lin}For any $T>0$ and $n \in \N$, there exists a unique solution $f^n_\e$  of the regularized and linearized equation \eqref{sys_lin} such that
\begin{itemize}
\item[(i)] $f^n_\e$ is compactly supported in both $x$ and $v$ satisfying 
\[
C_{x,0} + C_{v,0}(1 - e^{-t}) \leq r_X[f^n_\e] \leq R_X[f^n_\e] \leq C_{x,1}+TC_{v,1}
\]
and
\[
e^{-t}C_{v,0}<r_V[f^n_\e] \leq R_V[f^n_\e] \leq C_{v,1}.
\]
Here the positive constants $C_{x,i}, C_{v,i}, i=1,2$ are appeared in the beginning of this section.
\item[(ii)] $f^n_\e \in L^\infty(0,T; L^\infty(\R \times \R_+))$ satisfies
\[
\sup_{0 \leq t \leq T}\sup_{n \in \N}\|f^n_\e(\cdot,\cdot,t)\|_{W^{1,\infty}} \leq C_{\e, T, f_{0,\e}},
\]
for $\e \in (0,1)$, where $C_{\e, T, f_{0,\e}}>0$ depends on $\e, T$, and $\|f_{0,\e}\|_{W^{1,\infty}}$, but independent of $n$.
\end{itemize}
\end{proposition}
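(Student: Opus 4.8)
The plan is to argue by induction on $n$, solving the \emph{linear} transport equation \eqref{sys_lin} by the method of characteristics and propagating (i)--(ii) with constants independent of $n$. The base case $n=0$ is immediate since $f^0_\e = f_{0,\e}$ has support in $[C_{x,0},C_{x,1}]\times[C_{v,0},C_{v,1}]$ and lies in $W^{1,\infty}$. So I would assume $f^n_\e$ satisfies (i)--(ii) and show the same for $f^{n+1}_\e$.

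The first step is to record the elementary bounds on the coefficients of \eqref{sys_lin}. Writing $\rho^n_\e := \int_{\R_+} f^n_\e\,dv$ and $m^n_\e := \int_{\R_+} v f^n_\e\,dv$, the inclusion $supp_v f^n_\e\subseteq[0,C_{v,1}]$ gives $0\le m^n_\e\le C_{v,1}\rho^n_\e$, hence $0\le u^{\e,n}_\e\le C_{v,1}$ pointwise. Since $\theta_\e\in\mc^\infty_0(\R)$ and the denominator of $u^{\e,n}_\e$ is bounded below by $\e$, the map $u^{\e,n}_\e$ is smooth in $x$ and continuous in $t$, with $\|u^{\e,n}_\e\|_{W^{1,\infty}_x}$ controlled only in terms of $\e^{-1}$ and the $L^\infty$ bounds on $\rho^n_\e, m^n_\e$ --- crucially the $x$-derivative lands on $\theta_\e$ (costing $\e^{-1}\|\theta'\|_{L^1}$) and not on $f^n_\e$. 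Thus \eqref{eqn_rtra1} has a unique forward-in-time solution; from \eqref{eq_vn} and $0\le u^{\e,n}_\e\le C_{v,1}$ one reads off $v e^{-t}\le V^{n+1}_\e(t)\le C_{v,1}$ for $v\in[C_{v,0},C_{v,1}]$, and integrating $\dot X^{n+1}_\e=V^{n+1}_\e$ gives the bounds on $r_X[f^{n+1}_\e]$ and $R_X[f^{n+1}_\e]$. Solving \eqref{sys_lin} along these characteristics produces the representation
\[
f^{n+1}_\e(Z^{n+1}_\e(t),t) = f_{0,\e}(x,v)\exp\lt(\int_0^t c^n(Z^{n+1}_\e(s),s)\,ds\rt),\qquad c^n := 1 + \frac{m^n_\e - v\rho^n_\e}{1 + \e\rho^n_\e + \e m^n_\e},
\]
the extra $+1$ coming from $-\pa_v(u^{\e,n}_\e-v)$. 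This yields existence, uniqueness and nonnegativity of $f^{n+1}_\e$ and shows $supp f^{n+1}_\e(\cdot,\cdot,t)$ is the compact image of $supp f_{0,\e}$ under the flow, completing (i). For the $L^\infty$ part of (ii), the point is the elementary bound $\tfrac{s}{1+\e s}\le\e^{-1}$, which gives $\|c^n(\cdot,\cdot,t)\|_{L^\infty}\le 1+(1+C_{v,1})\e^{-1}=:C_\e$, independent of $n$ and $t$; the representation formula then gives $\|f^{n+1}_\e(\cdot,\cdot,t)\|_{L^\infty}\le\|f_{0,\e}\|_{L^\infty}e^{C_\e T}$ on $[0,T]$, and hence $\|\rho^n_\e\|_{L^\infty_x},\|m^n_\e\|_{L^\infty_x}$ are bounded uniformly in $n$.

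The last and main point is the uniform $W^{1,\infty}$ bound. Differentiating \eqref{sys_lin} in $x$ and $v$ gives transport equations for $\pa_x f^{n+1}_\e$ and $\pa_v f^{n+1}_\e$ along the same characteristics, with sources built from $\pa_x u^{\e,n}_\e$, $\pa_x c^n$, $\pa_v c^n$ and the lower-order coupling between $\pa_x f^{n+1}_\e$ and $\pa_v f^{n+1}_\e$. Here $\|\pa_v c^n\|_{L^\infty}=\big\|\rho^n_\e/(1+\e\rho^n_\e+\e m^n_\e)\big\|_{L^\infty}\le\e^{-1}$ and $\|\pa_x u^{\e,n}_\e\|_{L^\infty}$ are bounded uniformly in $n$ by the previous step, whereas $\|\pa_x c^n\|_{L^\infty}$ can only be bounded by $C_{\e,T,f_{0,\e}}\big(1+\|\pa_x f^n_\e\|_{L^\infty}\big)$, since it involves $\pa_x\rho^n_\e=\int_{\R_+}\pa_x f^n_\e\,dv$ and $\pa_x m^n_\e=\int_{\R_+}v\,\pa_x f^n_\e\,dv$. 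A Gronwall estimate along characteristics therefore yields, with $\alpha,\beta$ independent of $n$,
\[
\|f^{n+1}_\e(\cdot,\cdot,t)\|_{W^{1,\infty}} \le \alpha + \beta\int_0^t\|f^n_\e(\cdot,\cdot,s)\|_{W^{1,\infty}}\,ds,\qquad t\in[0,T],
\]
and iterating from $\|f^0_\e\|_{W^{1,\infty}}=\|f_{0,\e}\|_{W^{1,\infty}}$ gives
\[
\|f^n_\e(\cdot,\cdot,t)\|_{W^{1,\infty}} \le \alpha\sum_{k=0}^{n-1}\frac{(\beta t)^k}{k!} + \|f_{0,\e}\|_{W^{1,\infty}}\frac{(\beta t)^n}{n!} \le \big(\alpha+\|f_{0,\e}\|_{W^{1,\infty}}\big)e^{\beta T},
\]
which is (ii). The main obstacle is precisely this estimate: $\pa_x c^n$ cannot be dominated by anything $n$-independent pointwise in time, so one must exploit that it enters \emph{only under a time integral} in the characteristic Gronwall argument, which after iteration produces the convergent factorial series above; the two features that make this close are that the mollifier $\theta_\e$ absorbs the $x$-derivative of $u^{\e,n}_\e$ with no loss of regularity of $f^n_\e$, and that the regularization $\e\rho_\e(1+u_\e)$ in the denominator of \eqref{sys_reg} keeps $\pa_v c^n$ bounded by $\e^{-1}$.
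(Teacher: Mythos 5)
Your proposal is correct and follows the same basic strategy as the paper: solve the linear equation \eqref{sys_lin} along the characteristics \eqref{eqn_rtra1}, read off the velocity and position support bounds from \eqref{eq_vn} together with $0\leq u^{\e,n}_\e\leq C_{v,1}$, and then close Gr\"onwall-type estimates for the $L^\infty$ and $W^{1,\infty}$ norms using the $\e^{-1}$ bound on the regularized interaction coefficient. The differences are mostly methodological: for the velocity support the paper does not induct on $R_V[f^n_\e]\leq C_{v,1}$ but iterates the integral inequality $e^tR_V[f^{n+1}_\e(t)]\leq R_V[f_{0,\e}]+\int_0^t e^sR_V[f^n_\e(s)]\,ds$ down to $n=0$, obtaining the slightly sharper bound $R_V[f^n_\e(t)]\leq R_V[f_{0,\e}]$, and for (ii) the paper runs $\frac{d}{dt}\|\cdot\|_{L^p}$ estimates and lets $p\to\infty$, whereas you estimate pointwise along characteristics; these are equivalent. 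The one place where your write-up is genuinely more careful than the paper's is the uniformity in $n$ of the $W^{1,\infty}$ bound: in the paper the term corresponding to your $\pa_x c^n$ is bounded by $C(1+|v|)\|\pa_x f^n_\e\|_{L^p}$, i.e.\ by the \emph{previous} iterate's derivative, yet the final differential inequality is stated in closed form for $\|f^{n+1}_\e\|_{W^{1,p}}$ with a constant asserted to be independent of $n$, which silently hides that dependence. Your two-level inequality $\|f^{n+1}_\e(\cdot,\cdot,t)\|_{W^{1,\infty}}\leq\alpha+\beta\int_0^t\|f^n_\e(\cdot,\cdot,s)\|_{W^{1,\infty}}\,ds$ combined with the factorial-series iteration (the same device the paper uses for the velocity support) is exactly what is needed to justify the $\sup_{n\in\N}$ in the statement, so your route both matches the paper's intent and repairs its loosest step.
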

\begin{remark}
Note that the size of the support does not depend on $\e$ or $n$. We also note that
(i) clearly implies $f^n_\e(x,0,t)=0$ for $[0,T]$. 
\end{remark}
\begin{proof}[Proof of Proposition \ref{prop_lin}]
(i) In view of the observation that the solution to (\ref{sys_lin}) is written in the characteristic formulation:
$$
f^{n+1}_{\e}(X^{n+1}_\e(t), V^{n+1}_\e(t))=e^{\int^t_0 (A^n_\e(s) + 1)\,ds}f_{0,\e}(x,v)
$$
where $A^n_\e(s)$ denotes
$$
A^n_\e(s):=\frac{\rho^n_\e(X^{n+1}_\e(s),s)\lt(u^n_\e(X^{n+1}_\e(s),s) - V^{n+1}_\e(s)\rt)}{1+ \e \rho^n_\e(X^{n+1}_\e(s),s) \lt(1 + u^n_\e (X^{n+1}_\e (s),s)\rt)}.
$$
We see that it is enough to derive the compactness of the characteristics.
We first estimate $R_V[f^{n+1}_\e(t)]$. Note that
\[
u^{\e,n}_\e(X^{n+1}_\e(s),s) \leq \frac{\int_{\R \times \R_+} vf^n_\e(X^{n+1}_\e(s)-y,v,t)\theta_\e(y)\,dydv}{\int_{\R \times \R_+} f^n_\e(X^{n+1}_\e(s)-y,v,t)\theta_\e(y)\,dydv} \leq R_V[f^n_\e(t)]. 
\]
Then it follows from \eqref{eq_vn} that 
\begin{align*}
V^{n+1}_\e(t) &= v e^{-t} + e^{-t}\int_0^t u^{\e,n}_\e(X^{n+1}_\e(s),s) e^s\,ds\cr
&\leq R_V[f_{0,\e}] e^{-t} + e^{-t} \int_0^t  R_V[f^n_\e(s)] e^s\,ds.
\end{align*}
This implies
\[
e^tR_V[f^{n+1}_\e(t)]  \leq R_V[f_{0,\e}] + \int_0^t  e^sR_V[f^n_\e(s)] \,ds.
\]
For notational simplicity, we set $A^n(t) :=e^t R_V[f^{n}_\e(t)]$ for $n \in \N$ and iterate this relation to get
\begin{align*}
A^{n}(t)&\leq R_V[f_{0,\e}]+\int^t_0A^{n-1}(s)\,ds\cr
&\leq R_V[f_{0,\e}]+\int^t_0R_V[f_{0,\e}]\,ds+\int^t_0\int^{s}_0{A^{n-2}(s_1)}\,ds_1ds\cr
&\leq R_V[f_{0,\e}] + t R_V[f_{0,\e}]+\int^t_0\int^{s}_0R_V[f_{0,\e}]\,ds_1ds+
\int^t_0\int^{s}_0\int^{s_1}_0{A^{n-3}(s_2)}\,ds_2ds_1ds\cr
&\leq \quad \cdots \cr
&\leq \left(\sum_{n=0}^{\infty}\frac{t^n}{n!}\right)R_V[f_{0,\e}]\cr
&\leq e^tR_V[f_{0,\e}].
\end{align*}
Thus we obtain
\bq\label{est_rv}
e^tR_V[f^n_\e(t)] \leq e^tR_V[f_{0,\e}], \quad \mbox{i.e.,} \quad R_V[f^n_\e(t)] \leq R_V[f_{0,\e}]<C_{v,1}
\eq
for all $n \in \N$.
Moreover, we get
\begin{align*}
	V^{n+1}_\e(t) &\geq v e^{-t}\geq e^{-t} r_V[f_{0,\e}],
\end{align*}
and subsequently, this asserts
\begin{align*}
r_V[f^n_\e(t)]> e^{-t} r_V[f_{0,\e}]\geq e^{-t}C_{v,0},
\end{align*}
due to the compact support assumption on $f_{0,\e}$. This gives the compactness of the velocity support and  $f^n_\e(x,0,t)=0$. The compactness of position directly
follows from this.\newline

\noindent(ii) We now estimate $\|f^{n+1}_\e\|_{L^\infty}$ uniformly in $n$. For $1 \leq p < \infty$, we find
\begin{align*}
&\frac{d}{dt}\|f^{n+1}_\e\|_{L^p}^p \cr
&\quad = p\int_{\R \times \R_+} (f^{n+1}_\e)^{p-1} \lt(-v\pa_x f^{n+1}_\e - \pa_v \lt((u^{\e,n}_\e - v)f^{n+1}_\e \rt) + \frac{\rho^n_\e(u^n_\e - v)f^{n+1}_\e}{1+ \e \rho^n_\e (1 + u^n_\e)}  \rt)dxdv\cr
&\quad =(p-1)\|f^{n+1}_\e\|_{L^p}^p + p \int_{\R \times \R_+} (f^{n+1}_\e)^p \frac{\rho^n_\e(u^n_\e - v)}{1+ \e \rho^n_\e (1 + u^n_\e)}\,dxdv.
\end{align*}
For the last term on the right side of the above equality, we use \eqref{est_rv} to obtain
\begin{align*}
\lt|\int_{\R \times \R_+} (f^{n+1}_\e)^p \frac{\rho^n_\e(u^n_\e - v)}{1+ \e \rho^n_\e (1 + u^n_\e)}\,dxdv\rt| &\leq \int_{\R \times \R_+} (f^{n+1}_\e)^p \frac{\rho^n_\e(u^n_\e + R_V[f^{n+1}_\e])}{1+ \e \rho^n_\e (1 + u^n_\e)}\,dxdv\cr
&\leq C\int_{\R \times \R_+} (f^{n+1}_\e)^p \frac{\rho^n_\e(u^n_\e + 1)}{1+ \e \rho^n_\e (1 + u^n_\e)}\,dxdv\cr
&\leq \frac C\e\|f^{n+1}_\e\|_{L^p}^p,
\end{align*}
where $C>0$ is independent of $n, p$ and $\e$. Thus we get
\[
\frac{d}{dt}\|f^{n+1}_\e\|_{L^p} \leq C\lt(1 + \frac1\e\rt)\|f^{n+1}_\e\|_{L^p},
\]
where $C>0$ is independent of $n, p$ and $\e$. We now use Gr\"onwall's lemma and send $p\to\infty$ to have
\[
\|f^{n+1}_\e\|_{L^\infty} \leq \|f_{0,\e}\|_{L^\infty} e^{C(1 + \e^{-1})T},
\]
where $C>0$ is independent of $n$ and $\e$.

We next differentiate \eqref{sys_lin} with respect to $x$ to find
\[
\pa_t \pa_x f^{n+1}_\e + v \pa_{xx} f^{n+1}_\e + \pa_{xv} \lt((u^{\e,n}_\e - v)f^{n+1}_\e \rt) = \pa_x\lt(\frac{\rho^n_\e(u^n_\e - v)}{1+ \e \rho^n_\e (1 + u^n_\e)}f^{n+1}_\e\rt).
\]
Then we estimate
\begin{align*}
\frac{d}{dt}\|\pa_x f^{n+1}_\e\|_{L^p}^p &= p\int_{\R \times \R_+} |\pa_x f^{n+1}_\e|^{p-2} \pa_x f^{n+1}_\e\lt( -v\pa_{xx} f^{n+1}_\e - \pa_{xv} \lt((u^{\e,n}_\e - v)f^{n+1}_\e\rt) \rt)dxdv\cr
&\quad +p\int_{\R \times \R_+} |\pa_x f^{n+1}_\e|^{p-2} \pa_x f^{n+1}_\e \pa_x\lt(\frac{\rho^n_\e(u^n_\e - v)}{1+ \e \rho^n_\e (1 + u^n_\e)}f^{n+1}_\e\rt) dxdv\cr
&=: I_1 + I_2 + I_3.
\end{align*}
For $I_1$, we use the integration by parts to obtain
\[
I_1 = -\int_{\R \times \R_+} \pa_x (|\pa_x f^{n+1}_\e|^p) v\,dxdv = 0.
\]
For $I_2$, we get
\begin{align*}
I_2 &= - p\int_{\R \times \R_+} |\pa_x f^{n+1}_\e|^{p-2} \pa_x f^{n+1}_\e \lt(-\pa_x f^{n+1}_\e + \pa_x u^{\e,n}_\e \pa_v f^{n+1}_\e + (u^{\e,n}_\e - v) \pa_{xv} f^{n+1}_\e   \rt) dxdv\cr
&\leq p \|\pa_x f^{n+1}_\e\|_{L^p}^p + C_\e p \|f^n_\e\|_{L^\infty}\|\pa_x f^{n+1}_\e\|_{L^p}^{p-1}\|\pa_v f^{n+1}_\e\|_{L^p} + \|\pa_x f^{n+1}_\e\|_{L^p}^p.
\end{align*}
Here we used
\begin{align*}
|\pa_x u^{\e,n}_\e| &= \lt|\frac{1}{(\e + \rho^n_\e \star \theta_\e)^2}\lt(\pa_x((\rho^n_\e u^n_\e) \star \theta_\e)(\rho^n_\e \star \theta_\e) - ((\rho^n_\e u^n_\e) \star \theta_\e)(\pa_x\rho^n_\e \star \theta_\e) \rt) \rt|\cr
&\leq \lt|\frac{(\rho^n_\e u^n_\e) \star \pa_x \theta_\e}{\e + \rho^n_\e \star \theta_\e} \rt| + C\lt|\frac{\rho^n_\e \star \pa_x \theta_\e}{\e + \rho^n_\e \star \theta_\e} \rt|\cr
&\leq C_\e \|f^n_\e\|_{L^\infty}
\end{align*}
and
\begin{align*}
&- p\int_{\R \times \R_+} |\pa_x f^{n+1}_\e|^{p-2} \pa_x f^{n+1}_\e (u^{\e,n}_\e - v) \pa_{xv} f^{n+1}_\e dxdv \cr
&\quad = \int_{\R \times \R_+} \pa_v( |\pa_x f^{n+1}_\e|^p) (u^{\e,n}_\e - v) \,dxdv\cr
&\quad = \|\pa_x f^{n+1}_\e\|_{L^p}^p.
\end{align*}
For the estimate of $I_3$, we notice from \eqref{est_rv} that
\[
|\pa_x (\rho^n_\e u^n_\e)| = \lt|\int_{\R_+} v \pa_x f^n_\e\,dv\rt| \leq C\|\pa_x f^n_\e\|_{L^p}
\]
and
\[
|\pa_x  \rho^n_\e| = \lt|\int_{\R_+} \pa_x f^n_\e\,dv\rt| \leq C\|\pa_x f^n_\e\|_{L^p}.
\]
This gives
\begin{align*}
\lt|\pa_x \lt(\frac{\rho^n_\e(u^n_\e - v)}{1+ \e \rho^n_\e (1 + u^n_\e)} \rt)\rt| &= \lt|\frac{\pa_x (\rho^n_\e u^n_\e) - v\pa_x  \rho^n_\e}{(1+ \e \rho^n_\e (1 + u^n_\e))} - \frac{\e\rho^n_\e(u^n_\e - v)((\pa_x  \rho^n_\e + \pa_x (\rho^n_\e u^n_\e)))}{(1+ \e \rho^n_\e (1 + u^n_\e))^2}\rt|\cr
&\leq C(1 + |v|)\|\pa_x f^n_\e\|_{L^p},
\end{align*}
for $\e \in (0,1)$, and subsequently this asserts
\begin{align*}
I_3 &= p\int_{\R \times \R_+} |\pa_x f^{n+1}_\e|^{p-2} \pa_x f^{n+1}_\e  \pa_x \lt(\frac{\rho^n_\e(u^n_\e - v)}{1+ \e \rho^n_\e (1 + u^n_\e)} \rt)f^{n+1}_\e\,dxdv  \cr
&\quad+ p\int_{\R \times \R_+} |\pa_x f^{n+1}_\e|^{p-2} \pa_x f^{n+1}_\e  \frac{\rho^n_\e(u^n_\e - v)}{1+ \e \rho^n_\e (1 + u^n_\e)}\pa_x f^{n+1}_\e \,dxdv\cr
&\leq Cp\|\pa_x f^{n+1}_\e\|_{L^p}^{p-1}\|f^{n+1}_\e\|_{L^p} + C_\e p\|\pa_x f^{n+1}_\e\|_{L^p}^p.
\end{align*}
Thus we obtain
\[
\frac{d}{dt}\|\pa_x f^{n+1}_\e\|_{L^p} \leq C_{\e, T, f_{0,\e}}\lt(1 + \frac1p\rt)\|f^{n+1}_\e\|_{W^{1,p}}, 
\]
where $C_{\e, T, f_{0,\e}} > 0$ depends on $\e, T, $ and $\|f_{0,\e}\|_{L^\infty}$. Similarly, we also find 
\begin{align*}
\frac{d}{dt}\|\pa_v f^{n+1}_\e\|_{L^p}^p &= p\int_{\R \times \R_+} |\pa_v f^{n+1}_\e|^{p-2} \pa_v f^{n+1}_\e \lt( - \pa_x f^{n+1}_\e - \pa_{vv} ( (u^{\e,n}_\e - v)f^{n+1}_\e)\rt) dxdv\cr
&\quad + p\int_{\R \times \R_+} |\pa_v f^{n+1}_\e|^{p-2} \pa_v f^{n+1}_\e \frac{\rho^n_\e}{1+ \e \rho^n_\e (1 + u^n_\e)} \pa_v((u^n_\e - v)f^{n+1}_\e)\, dxdv\cr
&=: J_1 + J_2 + J_3.
\end{align*}
Here $J_i,i=1,2,3$ can be estimated as follows.
\begin{align*}
J_1 &\leq p\|\pa_v f^{n+1}_\e\|_{L^p}^{p-1}\|\pa_x f^{n+1}_\e\|_{L^p},\cr
J_2&= -p\int_{\R \times \R_+} |\pa_v f^{n+1}_\e|^{p-2} \pa_v f^{n+1}_\e \pa_v (-f^{n+1}_\e + (u^{\e,n}_\e - v)\pa_v f^{n+1}_\e )\,dxdv\cr
&=2p\|\pa_v f^{n+1}_\e\|_{L^p}^p - \|\pa_v f^{n+1}_\e\|_{L^p}^p,\cr
J_3&=p\int_{\R \times \R_+} |\pa_v f^{n+1}_\e|^{p-2} \pa_v f^{n+1}_\e \frac{\rho^n_\e}{1+ \e \rho^n_\e (1 + u^n_\e)} ( -f^{n+1}_\e + (u^n_\e - v)\pa_v f^{n+1}_\e)\, dxdv\cr
&\leq C_\e p\|\pa_v f^{n+1}_\e\|_{L^p}^{p-1}\|f^{n+1}_\e\|_{L^p} + C_\e p\|\pa_v f^{n+1}_\e\|_{L^p}^p.
\end{align*}
Hence we have
\[
\frac{d}{dt}\|\pa_v f^{n+1}_\e\|_{L^p} \leq C_\e \|f^{n+1}_\e\|_{W^{1,p}}.
\]
We now combine all of the above estimates to arrive at
\[
\frac{d}{dt}\|f^{n+1}_\e\|_{W^{1,p}} \leq C_{\e, T, f_{0,\e}}\lt(1+\frac1p\rt)\|f^{n+1}_\e\|_{W^{1,p}}. 
\]
Then, applying Gr\"onwall's lemma and letting $p \to \infty$, we get
\[
\|f^{n+1}_\e(\cdot,\cdot,t)\|_{W^{1,\infty}} \leq \|f_{0,\e}\|_{W^{1,\infty}} e^{C_{\e, T, f_{0,\e}}},
\]
for $t \in [0,T]$, where $C_{\e, T, f_{0,\e}} > 0$ is independent of $n$. 
\end{proof}
%
%
\subsection{Proof of Proposition \ref{prop_ext_reg}: Existence of weak solutions of \eqref{sys_reg}} 
In this subsection, we establish that the global-in-time existence of weak solutions to the regularized equation \eqref{sys_reg}. For this, we first show that the approximation sequence $\{f^n\}_{n \in \N}$ is Cauchy. It follows from \eqref{sys_lin} that
\begin{align*}
&\pa_t (f^{n+1}_\e-f^{n}_\e) + v\pa_x (f^{n+1}_\e-f^{n}_\e) + \pa_v ((u^{\e,n}_\e - v)(f^{n+1}_\e-f^{n}_\e)) + (u^{\e,n}_\e - u^{\e,n-1}_\e)\pa_v f^n_\e\cr
&\quad = \frac{\rho^n_\e (u^n_\e - v)}{1+ \e \rho^n_\e (1 + u^n_\e)}(f^{n+1}_\e-f^{n}_\e) + \frac{f^n_\e}{1+ \e \rho^n_\e (1 + u^n_\e)} \lt(\rho^n_\e u^n_\e - \rho^{n-1}_\e u^{n-1}_\e - v(\rho^n_\e - \rho^{n-1}_\e)\rt)\cr
&\qquad + f^n_\e \rho^{n-1}_\e(u^{n-1}_\e - v)\lt(\frac{1}{1+ \e \rho^n_\e (1 + u^n_\e)} - \frac{1}{1+ \e \rho^{n-1}_\e (1 + u^{n-1}_\e)} \rt).
\end{align*}
Then we obtain
\begin{align*}
&\frac{d}{dt}\|f^{n+1}_\e-f^{n}_\e\|_{L^p}^p \cr
&\quad = -p \int_{\R \times \R_+} |f^{n+1}_\e-f^{n}_\e|^{p-2} (f^{n+1}_\e-f^{n}_\e) \pa_v ((u^{\e,n}_\e - v)(f^{n+1}_\e-f^{n}_\e))\, dxdv\cr
&\qquad -p \int_{\R \times \R_+} |f^{n+1}_\e-f^{n}_\e|^{p-2} (f^{n+1}_\e-f^{n}_\e) (u^{\e,n}_\e - u^{\e,n-1}_\e)\pa_v f^n_\e dxdv\cr
&\qquad+ p \int_{\R \times \R_+} |f^{n+1}_\e-f^{n}_\e|^{p-2} (f^{n+1}_\e-f^{n}_\e)\rho^n_\e (u^n_\e - v)(f^{n+1}_\e-f^{n}_\e) \,dxdv \cr
&\qquad+ p \int_{\R \times \R_+} |f^{n+1}_\e-f^{n}_\e|^{p-2} (f^{n+1}_\e-f^{n}_\e) f^n_\e (\rho^n_\e u^n_\e - \rho^{n-1}_\e u^{n-1}_\e)\,dxdv \cr
&\qquad -p \int_{\R \times \R_+} |f^{n+1}_\e-f^{n}_\e|^{p-2} (f^{n+1}_\e-f^{n}_\e) v(\rho^n_\e - \rho^{n-1}_\e))\, dxdv\cr
&\qquad +p \int_{\R \times \R_+} |f^{n+1}_\e-f^{n}_\e|^{p-2} (f^{n+1}_\e-f^{n}_\e)\cr
&\hspace{3cm} \times \frac{\e f^n_\e \rho^{n-1}_\e(u^{n-1}_\e - v)(\rho^{n-1}_\e - \rho^n_\e + \rho^{n-1}_\e u^{n-1}_\e - \rho^n_\e u^n_\e)}{(1+ \e \rho^n_\e (1 + u^n_\e))(1+ \e \rho^{n-1}_\e (1 + u^{n-1}_\e))}\, dxdv\cr
&\quad =:\sum_{i=1}^6 K_i,
\end{align*}
where $K_i, i=1,\dots,6$ can be estimated as follows.
\begin{align*}
K_1 &= (p+1)\|f^{n+1}_\e-f^{n}_\e\|_{L^p}^p,\cr
K_2 &\leq C_{\e, T, f_{0,\e}}p\|f^{n+1}_\e-f^{n}_\e\|_{L^p}^{p-1}\|f^n_\e-f^{n-1}_\e\|_{L^\infty},\cr
K_3 &\leq C_{\e, T, f_{0,\e}}p\|f^{n+1}_\e-f^{n}_\e\|_{L^p}^p,\cr
K_4 &\leq C_{\e, T, f_{0,\e}}p\|f^{n+1}_\e-f^{n}_\e\|_{L^p}^{p-1}\|f^n_\e-f^{n-1}_\e\|_{L^\infty},\cr
K_5 &\leq Cp\|f^{n+1}_\e-f^{n}_\e\|_{L^p}^{p-1}\|f^n_\e-f^{n-1}_\e\|_{L^\infty},\cr
K_6 &\leq C_{\e, T, f_{0,\e}}p\|f^{n+1}_\e-f^{n}_\e\|_{L^p}^{p-1}\|f^n_\e-f^{n-1}_\e\|_{L^\infty}.
\end{align*}
Here we used
\begin{align}\label{n1}
|\rho^n_\e - \rho^{n-1}_\e| &= \lt|\int_{\R_+} (f^n_\e - f^{n-1}_\e) \,dv\rt| \leq C\|f^n_\e - f^{n-1}_\e\|_{L^\infty},\cr
|\rho^n_\e u^n_\e - \rho^{n-1}_\e u^{n-1}_\e| &= \lt|\int_{\R_+} v(f^n_\e - f^{n-1}_\e) \,dv \rt| \leq C\|f^n_\e - f^{n-1}_\e\|_{L^\infty},
\end{align}
and
\begin{align}\label{n2}
&|u^{\e,n}_\e - u^{\e,n-1}_\e| \cr
&\quad = \lt|\frac{(\rho^n_\e u^n_\e - \rho^{n-1}_\e u^{n-1}_\e) \star \theta_\e}{\e + \rho^n_\e}  + \frac{((\rho^{n-1}_\e u^{n-1}_\e)\star\theta_\e) (\rho^n - \rho^{n-1})\star\theta_\e}{(\e + \rho^n_\e)(\e + \rho^{n-1}_\e)}\rt|\cr
&\quad \leq C_{\e, T, f_{0,\e}}\|f^n_\e - f^{n-1}_\e\|_{L^\infty}.
\end{align}
Thus we have
\[
\frac{d}{dt}\|f^{n+1}_\e-f^{n}_\e\|_{L^p} \leq C_{\e, T, f_{0,\e}}\lt(\|f^{n+1}_\e-f^{n}_\e\|_{L^p} + \|f^n_\e-f^{n-1}_\e\|_{L^\infty}\rt),
\]
where $C_{\e, T, f_{0,\e}}>0$ is independent of $n$. This, together with applying Gr\"onwall's lemma and passing $p \to \infty$, yields
\[
\|(f^{n+1}_\e-f^{n}_\e)(\cdot,\cdot,t)\|_{L^\infty} \leq C_{\e, T, f_{0,\e}}\int_0^t \|(f^n_\e-f^{n-1}_\e)(\cdot,\cdot,s)\|_{L^\infty}\,ds.
\]
This concludes that $f^n$ is Cauchy in $L^\infty(0,T; L^\infty(\R \times \R_+))$ from which, for a fixed $\e > 0$, there exists a limiting function $f_\e$ such that
\[
\sup_{0 \leq t \leq T} \|(f^{n}_\e-f_\e)(\cdot,\cdot,t)\|_{L^{\infty}} \rightarrow 0,
\]
as $n\to\infty$. Due to \eqref{n1} and \eqref{n2}, we can easily show that the liming function $f_\e$ solves the regularized equation \eqref{sys_reg}.

\subsection{Proof of Proposition \ref{prop_ext_reg}: Uniform-in-$\e$ bound estimates}

In this part, we establish several uniform-in-$\e$ estimates for $f_\e$. 

\subsubsection{Support estimates} 
We recall from Proposition \ref{prop_lin} that
\begin{align*}
	f^n_\e(x,v)=0 ~\mbox{ if }~ (x,v)\in  (\R \times \R_+) \setminus \lt([C_{x,0}, C_{x,1}+TC_{v,1}] \times [ e^{-T}C_{v,0}, C_{v,1}]\rt)
\end{align*}
for $0\leq t\leq T$. Since $C_{x,i}$ and $C_{v,i}$ $(i=0,1)$ do not depend on $\e$ and $f^n_\e$ converges uniformly to $f_\e$,
this implies that the support of $f_\e$ is also contained in the same area: 
\[
[C_{x,0}, C_{x,1}+TC_{v,1}] \times [ e^{-T}C_{v,0}, C_{v,1}].
\]
That is,
\[
R_V[f_\e(t)]\leq C_{v,1},\quad R_X[f_\e(t)]\leq C_{x,1} +TC_{v,1}
\]
and
\[
r_X[f_\e(t)]\geq C_{x,0}, \quad r_V[f_\e(t)]\geq e^{-T}C_{v,0}.
\]
Note that this automatically implies $f_\e(x,0,t)=0$.

\subsubsection{Uniform bounds of the moment and energy estimates} 

We first provide the uniform energy estimate. It follows from \cite[Lemma 2.5]{KMT13} that 
\bq\label{gd_ineq}
\sup_{y \in \R}\int_{\R} \theta_\e(x-y) \frac{\rho_\e(x)}{\theta_\e \star \rho_\e(x)}\,dx \leq C,
\eq
where $C>0$ is independent of $\e$. On the other hand, a straightforward computation yields 
\begin{align*}
\frac12\frac{d}{dt}\int_{\R \times \R_+} v^2 f_\e\,dxdv &= \int_{\R \times \R_+} v \cdot  (u^\e_\e - v)f_\e\,dxdv + \int_{\R \times \R_+} \frac{v^2 \rho_\e(u_\e - v)f_\e}{1+ \e \rho_\e (1 + u_\e)}\,dxdv \cr
&\leq \frac12\int_{\R} |u^\e_\e|^2 \rho_\e\,dx - \frac12\int_{\R \times \R_+} v^2f_\e\,dxdv\cr
&\leq \frac12  \int_{\R} |u^\e_\e|^2 \rho_\e\,dx,
\end{align*}
where we used Lemma \ref{lem_lp}:
\begin{align*}
\int_{\R \times \R_+} \frac{v^2 \rho_\e(u_\e - v)f_\e}{1+ \e \rho_\e (1 + u_\e)}\,dxdv&=\int_\R \frac{\rho_\e}{1+ \e \rho^n_\e (1 + u^n_\e)}\lt(u_\e \int_{\R_+} v^2 f_\e\,dv - \int_{\R_+} v^3 f_\e\,dv \rt)dx\cr
& \leq 0,
\end{align*}
and 
\[
v\cdot(u^{\varepsilon}_{\varepsilon}-v)\leq \frac{(u^{\varepsilon}_{\varepsilon})^2+v^2}{2}-v^2=
\frac{(u^{\varepsilon}_{\varepsilon})^2-v^2}{2}.
\]
Note that 
$$\begin{aligned}
|u^\e_\e(x,t)|^2 &\leq \lt|\frac{\int_{\R \times \R_+} \theta_\e(x-y) wf_\e(y,w,t)\,dydw}{\theta_\e \star \rho_\e(x,t)} \rt|^2 \cr
&\leq \frac{\int_{\R \times \R_+} \theta_\e(x-y)w^2 f_\e(y,w,t)\,dydw}{\theta_\e \star \rho_\e(x,t)},
\end{aligned}$$
and this together with \eqref{gd_ineq} gives
\begin{align}\label{below}
\begin{aligned}
\int_{\R} \rho_\e |u^\e_\e|^2\,dx &\leq \int_{\R \times \R_+} \lt(\int_{\R_+} \theta_\e(x-y) \frac{\rho_\e(x)}{\theta_\e \star \rho_\e(x)}\,dx  \rt) w^2 f_\e(y,w)\,dydw\cr
&\leq C\int_{\R \times \R_+} v^2 f_\e(x,v)\,dxdv,
\end{aligned}
\end{align}
where $C>0$ is independent of $\e$. Hence we have
\[
\frac{d}{dt}\int_{\R \times \R_+} v^2 f_\e\,dxdv \leq C\int_{\R \times \R_+} v^2 f_\e\,dxdv,
\]
i.e.,
\[
\int_{\R \times \R_+} v^2 f_\e\,dxdv \leq C\int_{\R \times \R_+} v^2 f_{0,\e}\,dxdv,
\]
where $C>0$ is independent of $\e$.

We next show the moment estimate. By multiplying the regularized equation \eqref{sys_reg} by $v$ and integrating the resulting equation over $\R \times \R_+$, we find
\[
\frac{d}{dt} \int_{\R \times \R_+} vf_\e\,dxdv = -\int_{\R \times \R_+} v\partial_v((u^\e_\e-v)f_\e)\,dxdv + \int_{\R \times \R_+} \frac{v\rho_\e(u_\e - v)f_\e}{1+ \e \rho_\e (1 + u_\e)}\,dxdv.
\]
Here the last term on the right hand side of the above equation can be estimated as 
\[
\int_{\R \times \R_+} \frac{v\rho_\e(u_\e - v)f_\e}{1+ \e \rho_\e (1 + u_\e)}\,dxdv = -\int_{\R \times \R_+} \frac{\rho_\e(u_\e - v)^2 f_\e}{1+ \e \rho_\e (1 + u_\e)}\,dxdv,
\]
due to 
\[
\int_{\R_+} (u_\e - v)f_\e\,dv = 0.
\]
For the estimate of the first term on the right hand side, we use the uniform estimate above, \eqref{below}, and the fact that 
\[
\int_{\R} \rho_\e u_\e\,dx = \int_{\R \times \R_+} vf_\e\,dxdv \geq 0
\]
to get
\begin{align*}
-\int_{\R \times \R_+} v\partial_v ((u^\e_\e-v)f_\e)\,dxdv&=\int_{\R \times \R_+}(u^\e_\e-v)f_\e\,dxdv\cr
&=\int_{\R}(\rho_\e u^\e_\e-\rho_\e u_\e)\,dx\cr 
&\leq \int_{\R}\rho_\e u^\e_\e \,dx\cr
&\leq \frac{1}{2}\int_{\R}\rho_\e \,dx +\frac{1}{2}\int_{\R}\rho_\e|u^\e_\e|^2 \,dx\cr
&\leq C\int_{\R\times \R_+}(1+v^2)f_{\e}\,dxdv\cr
&\leq C\int_{\R\times \R_+}(1+v^2)f_{0,\e}\,dxdv,
\end{align*}
where $C>0$ is independent of $\e$. Thus, we obtain
\begin{align*}
\frac{d}{dt} \int_{\R \times \R_+} vf_\e\,dxdv  \leq C\int_{\R\times \R_+}(1+v^2)f_{0,\e}\,dxdv -\int_{\R \times \R_+} \frac{\rho_\e(u_\e - v)^2 f_\e}{1+ \e \rho_\e (1 + u_\e)}\,dxdv.
\end{align*}
By integrating the above differential inequality with respect to time, we have
\begin{align}\label{mom_est}
\begin{aligned}
&\int_{\R \times \R_+} vf_\e\,dxdv+\int^t_0\int_{\R \times \R_+} \frac{\rho_\e(u_\e - v)^2 f_\e}{1+ \e \rho_\e (1 + u_\e)}\,dxdvds\cr
&\quad \leq \int_{\R\times \R_+}vf_{0,\e}\,dxdv + C_T\int_{\R\times \R_+}(1+v^2)f_{0,\e}\,dxdv\cr
&\quad \leq C_T\int_{\R\times \R_+}(1+v^2)f_{0,\e}\,dxdv,
\end{aligned}
\end{align}
where $C>0$ is independent of $\e$.

\subsubsection{Uniform $L^\infty$-bound of $f_\e$} For $1 \leq p < \infty$, we find
\begin{align*}
\frac{d}{dt}\|f_\e\|_{L^p}^p &= p\int_{\R \times \R_+} (f_\e)^{p-1} \lt( - \pa_v \lt((u^{\e}_\e - v)f_\e \rt) + \frac{\rho_\e(u_\e - v)f_\e}{1+ \e \rho_\e (1 + u_\e)}  \rt)dxdv\cr
&=: L_1 + L_2.
\end{align*}
Here $L_1$ can be easily estimated as
\[
L_1 = (p-1)\|f_\e\|_{L^p}^p.
\]
For the estimate of $L_2$, we obtain
\begin{align*}
L_2 &= p\int_{\R \times \R_+} (f_\e)^p\frac{\rho_\e(u_\e - v)}{1+ \e \rho_\e (1 + u_\e)}\,dxdv \cr
&\leq p\|f_\e\|_{L^\infty}^{p - 1/2}\int_{\R \times \R_+} (f_\e)^{1/2}\frac{\rho_\e|u_\e - v|}{1+ \e \rho_\e (1 + u_\e)}\,dxdv\cr
&\leq Cp\|f_\e\|_{L^\infty}^{p - 1/2}\lt(\int_\R \frac{\rho_\e}{1+ \e \rho_\e (1 + u_\e)}\,dx \rt)^{1/2}\lt(\int_{\R \times \R_+} \frac{\rho_\e(u_\e - v)^2 f_\e}{1+ \e \rho_\e (1 + u_\e)}\,dxdv \rt)^{1/2}\cr
&\leq Cp\|f_\e\|_{L^\infty}^{p - 1/2}\lt(\int_{\R \times \R_+} \frac{\rho_\e(u_\e - v)^2 f_\e}{1+ \e \rho_\e (1 + u_\e)}\,dxdv \rt)^{1/2},
\end{align*}
where we used the uniform bound estimates of supports of $f_\e$. Thus we have
\begin{align*}
\frac{d}{dt}\|f_\e\|_{L^p} &\leq C\|f_\e\|_{L^p} + C\|f_\e\|_{L^p}^{1/2}\lt(\int_{\R \times \R_+} \frac{\rho_\e(u_\e - v)^2 f_\e}{1+ \e \rho_\e (1 + u_\e)}\,dxdv \rt)^{1/2} \cr
&\leq C\|f_\e\|_{L^p} + \int_{\R \times \R_+} \frac{\rho_\e(u_\e - v)^2 f_\e}{1+ \e \rho_\e (1 + u_\e)}\,dxdv.
\end{align*}
We now use Gr\"onwall's lemma and pass to the limit $p \to \infty$ to conclude
\begin{align*}
\|f_\e(\cdot,\cdot,t)\|_{L^\infty} &\leq C\|f_{0,\e}\|_{L^\infty} + C\int_0^t \int_{\R \times \R_+} \frac{\rho_\e(u_\e - v)^2 f_\e}{1+ \e \rho_\e (1 + u_\e)}\,dxdvds\cr
&\leq C\|f_{0,\e}\|_{L^\infty} + C_T\int_{\R \times \R_+} (1 + v^2)f_{0,\e}\,dxdv,
\end{align*}
where $C>0$ is independent of $\e$ and we used the uniform moment estimate \eqref{mom_est}.

%
%
%
%
%
%
\section{Global existence of weak solutions: Proof of Theorem \ref{thm_main}}\label{sec_gl_weak}
In this section, we provide the details of proof of Theorem \ref{thm_main}. 
\subsection{Strong compactness of $\rho_\e$ and $u_\e$}
From the argument in the previous section, we see that there exists $f\in L^1( \R\times \R_+ \times (0,{T}))$ such that $f_{\e}$, $f_{\e}v$   converge to $f$, $fv$ weakly in $L^1( \R\times \R_+ \times (0,{T}))$
respectively, which also implies
\[
\rho_\e = \int_{\R_+} f_\e \,dv \rightharpoonup \int_{\R_+} f \,dv = \rho \quad \mbox{and} \quad \rho_\e u_\e = \int_{\R_+} vf_\e \,dv \rightharpoonup \int_{\R_+} vf \,dv = \rho u,
\]
 in $L^1(\R \times (0,T))$, respectively.
Thanks to Lemma \ref{lem_veloa},  the above convergences actually are strong, which also give the almost everywhere
convergences of the macroscopic fields $\rho_\e$ and $\rho_\e u_\e$:
\bq\label{ae}
\rho_{\e} \rightarrow \rho \quad \mbox{a.e on } \R\times [0,T] \quad \mbox{and} \quad
\rho_{\e}u_{\e } \rightarrow \rho u \quad  \mbox{a.e on } \R\times [0,T].
\eq

\subsection{$f_\e u^\e_\e$ converges to $fu$ in $L^\infty(0,T; L^p(\R \times \R_+))$}
In this part, we show that 
\[
f_\e u^\e_\e \rightharpoonup f u \quad \mbox{in} \quad L^\infty(0,T; L^p(\R \times \R_+)) \quad \mbox{for} \quad p \in \lt(1, 3/2 \rt).
\]
Even though the proof is almost the same with \cite{KMT13}, we briefly present it for the completeness of our work. It follows from \eqref{ae} together with \cite[Lemma 6]{KMT13} that
\[
(\rho_\e u_\e)\star \theta_\e \to \rho u, \quad \rho_\e \star \theta_\e \to \rho \quad \mbox{a.e.} \quad \mbox{and} \quad L^p(\R \times (0,T))\mbox{-strong},
\]
up to a subsequence, for all $p \in (1, 3/2)$. Let
\[
\rho^\varphi_\e = \int_{\R_+} f_\e \varphi(v)\,dv,
\]
for a given test function $\varphi(v)$. Consider a test function $\psi(x,v,t) := \phi(x,t)\varphi(v)$ with $\phi \in \mc^\infty_c(\R \times (0,T))$ and $\varphi \in \mc^\infty_c(\R_+)$. Then we find
\[
\int_0^{T} \int_{\R \times \R_+} f_\e u^\e_\e \psi\,dxdvdt = \int_0^{T} \int_{\R} u^\e_\e \rho^\varphi_\e \phi \,dxdt.
\]
Note that if $p \in (1, 3/2)$, then $p/(2-p) \in (1, 3)$, and this gives the following uniform boundedness in $\e$:
\[
\|u^\e_\e \rho^\varphi_\e\|_{L^p} \leq \|\varphi\|_{L^\infty}\|\rho_\e\|^{1/2}_{L^{p/(2-p)}} \|\sqrt{\rho_\e} u^\e_\e\|_{L^2} < \infty.
\]
This implies that there exists a function $m \in L^\infty(0,T; L^p(\R))$ such that
\[
u^\e_\e \rho^\varphi_\e \rightharpoonup m \quad \mbox{in} \quad L^\infty(0,T; L^p(\R)) \quad \mbox{for all} \quad p \in \lt(1, 3/2 \rt)
\]
up to a subsequence. We now prove 
\[
m = u \rho^\varphi, \quad \mbox{where} \quad \rho^\varphi = \int_{\R_+} f \varphi\,dv \mbox{ and } \rho u = \int_{\R_+} v f\,dv.
\]
By using the set $E_R^0:= \lt\{ (x,t) \in (-R,R) \times (0, T)\,:\, \rho(x,t) =0 \rt\}$, we estimate
\[
\|u^\e_\e \rho^\varphi_\e\|_{L^p(E_R^0)} \leq C\|\rho_\e\|^{1/2}_{L^{p/(2-p)}(E_R^0)} \to 0 \quad \mbox{as} \quad \e \to 0.
\]
Thus it suffices to check 
\[
m = u \rho^\varphi \quad \mbox{whenever} \quad \rho > 0.
\]
For this, we introduce a set
\[
E_R^\delta := \lt\{ (x,t) \in (-R,R) \times (0, T)\,:\, \rho(x,t) > \delta \rt\}.
\]
Due to the compactness of $\rho_\e$ and $\rho_\e \star \theta_\e$, by Egorov's theorem, for any $\eta> 0$, there exists a set $C_\eta \subset E_R^\delta$ with $|E_R^\delta \setminus C_\eta | < \eta$ on which $\rho_\e$ and $\rho_\e \star \theta_\e$ uniformly converge to $\rho$. This asserts $\rho_\e \star \theta_\e > \delta/2$ in $C_\eta$ for $\e>0$ small enough. Thus we obtain
\[
u^\e_\e \rho^\varphi_\e = \frac{(\rho_\e u_\e)\star\theta_\e}{\e + \rho_\e \star \theta_\e} \rho^\varphi_\e \to m = u \rho^\varphi \quad \mbox{in} \quad C_\eta.
\]
This further yields
\[
m = u\rho^\varphi \quad \mbox{on} \quad \{ \rho > 0\},
\]
since $\eta > 0$, $R>0$, and $\delta > 0$ were arbitrary. Consequently, we have
\[
\int_0^{T} \int_{\R \times \R_+} f_\e u^\e_\e \psi\,dxdvdt \to \int_0^{T} \int_{\R} u \rho^\varphi \phi dxdt = \int_0^{T} \int_{\R \times \R_+} f u \psi\,dxdvdt
\]
for all test functions of the form $\psi(x,v,t) = \phi(x,t)\varphi(v)$. 



\subsection{$(\rho_\e u_\e f_\e)/(1+\e(\rho_\e+\rho_\e u_\e))$ converges to $\rho u f$ in $L^\infty(0,T; L^p(\R \times \R_+))$}
Employing the same notations with that in the previous section, we get
\[
\int_0^{T} \int_{\R \times \R_+} \frac{\rho_\e u_\e f_\e}{1+\e(\rho_\e+\rho_\e u_\e)} \psi\,dxdvdt = \int_0^{T} \int_{\R} \frac{\rho_\e u_\e \rho^\varphi_\e }{1+\e(\rho_\e+\rho_\e u_\e)}\phi \,dxdt.
\]
Note that if $p \in (1, 3/2)$, then $3p/(2-p) \in (3, 9)$, and this gives the following uniform boundedness in $\e$:
\[
\lt\|\frac{\rho_\e u_\e \rho^\varphi_\e}{1+\e(\rho_\e+\rho_\e u_\e)}\rt\|_{L^p} \leq \|\rho_\e u_\e \rho^\varphi_\e\|_{L^p}\leq \|\varphi\|_{L^\infty}\|\rho_\e\|^{3/2}_{L^{3p/(2-p)}} \|\sqrt{\rho_\e} u_\e\|_{L^2} < \infty.
\]
This again gives the existence of a function $m \in L^\infty(0,T; L^p(\R))$ such that
\[
\frac{\rho_\e u_\e \rho^\varphi_\e}{1+\e(\rho_\e+\rho_\e u_\e)} \rightharpoonup m \quad \mbox{in} \quad L^\infty(0,T; L^p(\R)) \quad \mbox{for all} \quad p \in \lt(1, 3/2 \rt)
\]
up to a subsequence. We then show that
\[
m = \rho u \rho^\varphi, \quad \mbox{where} \quad \rho^\varphi = \int_{\R_+} f \varphi\,dv \mbox{ and } \rho u = \int_{\R_+} v f\,dv.
\]
By using the set $E_R^0= \lt\{ (x,t) \in (-R,R) \times (0, T)\,:\, \rho(x,t) =0 \rt\}$ again, we obtain
\[
\|u_\e \rho^\varphi_\e\|_{L^p(E_R^0)} \leq C \|\rho_\e\|^{3/2}_{L^{3p/(2-p)}(E_R^0)} \to 0 \quad \mbox{as} \quad \e \to 0.
\]
Again by considering the set $E^\delta_\R$ defined as before and employing almost the same argument as in the previous section, we can show that 
\[
m = \rho u \rho^\varphi \quad \mbox{whenever} \quad \rho > 0.
\]
Hence we have
\[
\int_0^{T} \int_{\R \times \R_+} \frac{\rho_\e u_\e f_\e}{1+\e(\rho_\e+\rho_\e u_\e)} \psi\,dxdvdt \to \int_0^{T} \int_{\R} u \rho^\varphi \phi dxdt = \int_0^{T} \int_{\R \times \R_+} f u \psi\,dxdvdt
\]
for all test functions of the form $\psi(x,v,t) = \phi(x,t)\varphi(v)$. 


\begin{proof}[Proof of Theorem \ref{thm_main}] Equipped with the previous convergence estimates, there is no problem with passing to the limit $\e \to 0$ in \eqref{sys_reg} to conclude that the limiting function $f$ is a weak solution to \eqref{main_sys} in the sense of Definition \ref{def_weak}.
\end{proof}

\section{Hydrodynamic limit: Proof of Theorem \ref{thm_hydro}}\label{sec_hydro}
In this section, we present the details of proof of Theorem \ref{thm_hydro}. As mentioned before, our proof relies on the relative entropy argument.
\subsection{Relative entropy estimate}
We first rewrite the equations \eqref{Euler_eq} as a conservative form:
\[
U_t + \nabla \cdot A(U) = 0,
\]
where 
\[
m = \rho u, \quad U := \begin{pmatrix}
\rho \\
m 
\end{pmatrix},
\quad
A(U) := \begin{pmatrix}
m  \\
m^2/\rho
\end{pmatrix}.
\]
Then the above system have the macro entropy form $E(U) := m^2/(2\rho)$. Note that the entropy defined above is not strictly convex with respect to $\rho$. We now define the relative entropy functional $\mh$ as follows.
\bq\label{def_rel}
\mh(\bar U|U) := E(\bar U) - E(U) - DE(U)(\bar U-U) \quad \mbox{with} \quad \bar U := \begin{pmatrix}
        \bar\rho \\
        \bar m \\
    \end{pmatrix},
\eq
where $D E(U)$ denotes the derivation of $E$ with respect to $\rho, m$, i.e.,
\[
DE(U) = \begin{pmatrix}
\displaystyle        -m^2/(2\rho^2) \\
\displaystyle        m/\rho
    \end{pmatrix}.
\]
This asserts
\[
\mh(\bar U|U) = \frac{\bar\rho \bar u^2}{2} - \frac{\rho u^2}{2} - \frac{u^2}{2}(\rho - \bar \rho) - u\cdot (\bar \rho \bar u - \rho u)= \frac{\bar\rho}{2}(u - \bar u)^2.
\] 
As mentioned in Introduction, the relative entropy defined above does not give any information about the discrepancy between $\rho$ and $\bar \rho$. 
\begin{lemma}\label{lem_rel}The relative entropy $\mh$ defined in \eqref{def_rel} satisfies the following equality.
\begin{align*}
&\frac{d}{dt}\int_\R \mh(\bar U|U)\,dx \cr
&\quad= \int_\R \partial_t E(\bar U)\,dx - \int_\R \nabla (DE(U)):A(\bar U|U)\,dx - \int_\R DE(U)\lt( \pa_t \bar U + \nabla \cdot A(\bar U)\rt)dx,
\end{align*}
where $A(\bar U|U)$ is the relative flux functional given by 
\[
A(\bar U|U) := A(\bar U) - A(U) -DA(U)(\bar U-U).
\]
\end{lemma}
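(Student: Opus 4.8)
The plan is to run the standard relative-entropy computation: differentiate $\int_\R \mh(\bar U|U)\,dx$ in time, apply the chain rule, and feed in the fact that $U$ is a smooth solution of $\pa_t U + \nabla\cdot A(U) = 0$. First I would expand, using \eqref{def_rel},
\[
\pa_t \mh(\bar U|U) = \pa_t E(\bar U) - \pa_t E(U) - [\pa_t DE(U)](\bar U - U) - DE(U)(\pa_t\bar U - \pa_t U).
\]
Writing $\pa_t E(U) = DE(U)\pa_t U$ and $\pa_t DE(U) = D^2E(U)\pa_t U$, the two copies of $DE(U)\pa_t U$ cancel, leaving
\[
\frac{d}{dt}\int_\R\mh(\bar U|U)\,dx = \int_\R \pa_t E(\bar U)\,dx - \int_\R DE(U)\pa_t\bar U\,dx - \int_\R [\pa_t DE(U)](\bar U - U)\,dx.
\]
In the last integral I would substitute $\pa_t U = -\nabla\cdot A(U) = -DA(U)\nabla U$, turning it into $\int_\R (\bar U - U)^\top D^2E(U)DA(U)\nabla U\,dx$, and I would record the identity $\nabla(DE(U)) = D^2E(U)\nabla U$.

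The reorganization step is where the claimed form emerges. I add and subtract $\int_\R DE(U)\nabla\cdot A(\bar U)\,dx$ to build the error term $-\int_\R DE(U)(\pa_t\bar U + \nabla\cdot A(\bar U))\,dx$, integrate by parts in $x$ to move the derivative off $A(\bar U)$ onto $DE(U)$, and expand everything against the definition $A(\bar U|U) = A(\bar U) - A(U) - DA(U)(\bar U-U)$. Two structural facts make the pieces fit: (i) $D^2E(U)DA(U)$ is symmetric --- for $E(U) = m^2/(2\rho)$ and $A(U) = (m, m^2/\rho)^\top$ this is verified by a short $2\times 2$ computation --- which is what lets $\int (\bar U-U)^\top D^2E(U)DA(U)\nabla U$ be identified with the $DA(U)(\bar U-U)$ contribution inside $\int \nabla(DE(U)):A(\bar U|U)$; and (ii) the leftover term $\int_\R \nabla U : D^2E(U)A(U)\,dx$ vanishes. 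For (ii) I would note that $A(U) = uU$ and, since $E$ is positively $1$-homogeneous, $D^2E(U)U \equiv 0$, whence $D^2E(U)A(U)\equiv 0$; equivalently, the integrand is a perfect spatial derivative and integrates to zero by the decay of $U$.

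This is essentially bookkeeping, so the main care needed is in the tensor contractions --- treating $DE$ as a covector and $D^2E$, $DA$ as matrices, and tracking transposes when invoking the symmetry of $D^2E(U)DA(U)$ --- and in justifying the spatial integration by parts, which is legitimate because $U$ is the $H^s$ solution of \eqref{Euler_eq} and $\bar U$ inherits enough decay and integrability from the compactly supported kinetic data. I do not anticipate a genuine obstacle; the only mildly subtle point is recognizing that the spurious term in (ii) is identically zero, which hinges on the special structure $A(U) = uU$ of the pressureless flux.
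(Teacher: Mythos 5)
Your proposal is correct and follows essentially the same route as the paper: expand the time derivative of $\mh(\bar U|U)$, cancel the $DE(U)\pa_t U$ terms, substitute $\pa_t U=-\nabla\cdot A(U)$, add and subtract $DE(U)\nabla\cdot A(\bar U)$, integrate by parts, and identify the remainder with $-\int_\R \nabla(DE(U)):A(\bar U|U)\,dx$ via the definition of the relative flux. The only difference is cosmetic: you verify the two structural facts directly (symmetry of $D^2E(U)DA(U)$ by a $2\times2$ computation, and $D^2E(U)A(U)\equiv 0$ from $A(U)=uU$ and the $1$-homogeneity of $E$), whereas the paper uses the symmetry implicitly and cites \cite{KMT15} for $\int_\R \nabla(DE(U)):A(U)\,dx=0$.
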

\begin{proof}It follows from \eqref{def_rel} that
\begin{align*}
\frac{d}{dt}\int_\R \mh(\bar U|U)\,dx & = \int_\R \partial_t E(\bar U)\,dx - \int_\R DE(U)(\pa_t \bar U + \nabla \cdot A(\bar U))\,dx \cr
&\quad +\int_\R D^2 E(U) \nabla \cdot A(U)(\bar U-U) + DE(U) \nabla \cdot A(\bar U)\,dx\cr
&=: \sum_{i=1}^3 I_i,
\end{align*}
where $I_3$ can be easily estimated as
\begin{align*}
I_3 &= \int_\R \left(\nabla DE(U)\right):\left( DA(U)(\bar U-U) - A(\bar U)\right)dx \cr
&= -\int_\R \left(\nabla DE(U)\right):\left(A(\bar U|U) + A(U)\right)dx\cr
&=-\int_\R \left(\nabla DE(U)\right):A(\bar U|U)\,dx.
\end{align*}
Here we used the fact from \cite{KMT15} that
\[
\int_\R \left(\nabla DE(U)\right):A(U)\,dx =0.
\]
\end{proof}
We now set 
\[
m^\e := \rho^\e u^\e \quad \mbox{and} \quad U^\e := \begin{pmatrix} \rho^\e \\ m^\e  \end{pmatrix} \quad \mbox{with} \quad \rho^\e := \intr f^\e\,dv \quad \mbox{and} \quad  m^\e := \intr v f^\e\,dv,
\]
where $f^\e$ is a weak solution to the equation \eqref{main_hydro} in the sense of Definition \ref{def_weak}.
\begin{proposition} Let $f^\e$ be a weak solution to the equation \eqref{main_hydro} and $(\rho,u)$ be a strong solution to the system \eqref{Euler_eq} on the time interval $[0,T]$. Suppose that the assumptions {\bf (H1)}--{\bf (H2)} hold.  Then we have
\begin{align}\label{est_h1}
&\sup_{0 \leq t \leq T}\int_\R \mh(U^\e(t)|U(t))\,dx  \leq \mathcal{O}(\e).
\end{align}
\end{proposition}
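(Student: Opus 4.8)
The plan is to run a relative entropy argument based on Lemma~\ref{lem_rel} with the choice $\bar U = U^\e$, and to close a Gr\"onwall inequality for $t\mapsto \int_\R \mh(U^\e(t)|U(t))\,dx$; recall from the algebra after \eqref{def_rel} that $\mh(U^\e|U)=\tfrac{\rho^\e}{2}(u^\e-u)^2$, so this is a \emph{modulated kinetic energy} estimate. \emph{Step 1 (macroscopic balance laws).} First I would integrate \eqref{main_hydro} against $1$ and $v$ over $\R_+$. Using $\int_{\R_+}(v-u^\e)f^\e\,dv=0$, the vanishing of the boundary terms in the $v$-integration by parts (valid because, exactly as in Theorem~\ref{thm_main}, $f^\e$ stays supported in a fixed velocity interval $[c_0,c_1]\subset(0,\infty)$ independent of $\e$), and the identity $\int_{\R_+}v^2 f^\e\,dv=\tfrac{(m^\e)^2}{\rho^\e}+P^\e$ with the \emph{pressure defect} $P^\e:=\int_{\R_+}(v-u^\e)^2 f^\e\,dv\ge0$, one gets the exact system $\pa_t\rho^\e+\pa_x m^\e=0$ and $\pa_t m^\e+\pa_x\!\big(\tfrac{(m^\e)^2}{\rho^\e}\big)=-\pa_x P^\e-\tfrac1\e\rho^\e P^\e$. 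Thus $\pa_t U^\e+\nabla\cdot A(U^\e)=\big(0,\,-\pa_x P^\e-\tfrac1\e\rho^\e P^\e\big)^{\!\top}$, so the entire $\mathcal O(1/\e)$ part of the source is a drag term driven by $P^\e$.

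\emph{Step 2 (a priori inputs).} From the scaled version of Lemma~\ref{lem_lp}, together with the symmetrized form $\int_{\R_+}v^2 Q_i(f^\e)\,dv=-\tfrac12\iint(v-v_*)^2(v+v_*)f^\e f^\e_*\,dv\,dv_*$ and the lower bound $v+v_*\ge2c_0$, I obtain the dissipation inequality $\tfrac{d}{dt}\tfrac12\int_{\R\times\R_+}v^2 f^\e\,dxdv+\tfrac1\e\int_\R P^\e\,dx+\tfrac{c_0}{\e}\int_\R\rho^\e P^\e\,dx\le0$, hence $\tfrac1\e\int_0^T\!\int_\R(P^\e+\rho^\e P^\e)\,dxdt=\mathcal O(1)$. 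Feeding $\tfrac1\e\int_0^t\!\int\rho^\e P^\e$ into the $L^\infty$-estimate of Theorem~\ref{thm_main} (its right-hand side for the scaled model is precisely of this form) gives $\|f^\e\|_{L^\infty}\le C$, hence $\rho^\e\le C$ and $0\le P^\e\le(c_1-c_0)^2\rho^\e\le C$, all uniformly in $\e$. Finally, a short computation shows that {\bf (H1)} and {\bf (H2)} (the latter controlling the density discrepancy through Lipschitz functions built from $(\rho_0,u_0)$) yield $\int_\R\mh(U^\e_0|U_0)\,dx=\mathcal O(\e)$.

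\emph{Step 3 (relative entropy identity).} A direct computation gives $A(U^\e|U)=\big(0,\,\rho^\e(u^\e-u)^2\big)^{\!\top}$, $\nabla(DE(U))=(-u\pa_x u,\,\pa_x u)$, $DE(U)=(-\tfrac{u^2}{2},u)$, while $\pa_t E(U^\e)$ is evaluated either from the macroscopic equations of Step~1 or from the kinetic energy balance, using the exact identity $\int_{\R_+}v^2 Q_i(f^\e)\,dv=-2u^\e\rho^\e P^\e-\rho^\e\!\int_{\R_+}(v-u^\e)^3 f^\e\,dv$, together with $\pa_t u=-u\pa_x u$ (the strong Euler velocity solves Burgers' equation). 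Substituting into Lemma~\ref{lem_rel}, the leading $\mathcal O(1/\e)$ contribution of the $Q_i$-drag to $\pa_t E(U^\e)$ cancels the contribution of the source term in $\pa_t U^\e+\nabla\cdot A(U^\e)$, and one is left with
\[
\frac{d}{dt}\int_\R\mh(U^\e|U)\,dx=-\int_\R\pa_x u\,\rho^\e(u^\e-u)^2\,dx-\frac1\e\int_\R(u^\e-u)\rho^\e P^\e\,dx+\int_\R(\pa_x u^\e-\pa_x u)P^\e\,dx-\frac1{2\e}\int_\R\rho^\e\!\int_{\R_+}(v-u^\e)^3 f^\e\,dv\,dx .
\]
The first term is bounded by $2\|\pa_x u\|_{L^\infty}\int_\R\mh(U^\e|U)\,dx\le C(M)\int_\R\mh(U^\e|U)\,dx$, which is Gr\"onwall-admissible.

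\emph{Step 4 (closing) and the main obstacle.} The delicate point is the remaining three terms, all of order $\mathcal O(1/\e)$ pointwise. Here I would use the uniform bounds of Step~2 ($\rho^\e,P^\e\le C$; $|u^\e-u|\le c_1+M$; $|v-u^\e|\le c_1-c_0$, so $|\int(v-u^\e)^3 f^\e\,dv|\le(c_1-c_0)P^\e$; the factor $\pa_x u^\e$ being removed by an integration by parts in $x$), combined with Young's inequality and the $\mathcal O(1/\e)$ dissipation $\tfrac1\e\int_\R(P^\e+\rho^\e P^\e)\,dx$ from Step~2, to bound the time integral of these three terms by $\mathcal O(\e)+C\int_0^t\!\int_\R\mh(U^\e|U)\,dxds$; Gr\"onwall then gives \eqref{est_h1}. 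I expect this absorption to be the hard part: the interaction operator $Q_i$ produces genuinely $\mathcal O(1/\e)$ contributions (in particular $-\tfrac1\e\int(u^\e-u)\rho^\e P^\e\,dx$) in the relative entropy balance that must be \emph{absorbed into the $\mathcal O(1/\e)$ dissipation} rather than merely estimated — this is precisely where the dissipative structure of $Q_i$ replaces the diffusion used in \cite{KMT15}. The positivity of velocities keeps $v+v_*\ge2c_0$ bounded away from zero, producing the extra $\tfrac{c_0}{\e}\int\rho^\e P^\e$ dissipation, while the alignment operator $Q_r$ confines the velocity support uniformly in $\e$, turning the surviving corrections into true $\mathcal O(\e)$ quantities; without these two features one only reaches the weaker rate $\mathcal O(\sqrt\e)$ already visible in $\int_\R P^\e\,dx$ at the initial time.
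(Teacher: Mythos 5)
Your Steps 1--2 are essentially sound (the macroscopic balance laws with source $-\pa_x P^\e-\tfrac1\e\rho^\e P^\e$, $P^\e:=\intr (v-u^\e)^2f^\e\,dv$, and the symmetrization $\intr v^2Q_i(f)\,dv=-\tfrac12\iint(v-v_*)^2(v+v_*)ff_*\,dvdv_*$ are correct), but the argument does not close at Step 4, and this is a genuine gap rather than a technicality. After the cancellation you describe, the dangerous term is $-\tfrac1\e\int_\R(u^\e-u)\rho^\e P^\e\,dx$. The only smallness available from your Step 2 is $\tfrac1\e\int_0^T\!\int_\R(P^\e+c_0\rho^\e P^\e)\,dxdt=\mathcal{O}(1)$ (note: $\mathcal{O}(1)$, not $\mathcal{O}(\e)$) together with uniform $L^\infty$ bounds. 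A direct estimate $\tfrac1\e\int_0^t\!\int\rho^\e|u^\e-u|P^\e\le \|u^\e-u\|_{L^\infty}\cdot\tfrac1\e\int_0^t\!\int\rho^\e P^\e$ therefore gives only $\mathcal{O}(1)$, while any Young splitting of $\tfrac1\e\rho^\e|u^\e-u|P^\e$ either leaves a factor $1/\e$ in front of $\rho^\e(u^\e-u)^2$ (so Gr\"onwall produces $e^{CT/\e}$) or requires a pointwise bound $P^\e=\mathcal{O}(\e)$, which you do not have (you only have $P^\e\le C$ pointwise and $\int_0^T\!\int P^\e\,dxdt=\mathcal{O}(\e)$). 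So ``absorb into the $\mathcal{O}(1/\e)$ dissipation'' is precisely the step that needs a new idea, and the proposal does not supply it --- you yourself flag it as the expected hard part. A secondary inaccuracy: your Step 3 identity conflates the macroscopic entropy $E(U^\e)=\rho^\e(u^\e)^2/2$ with the kinetic energy $\tfrac12\intr v^2f^\e\,dv$; computing $\pa_tE(U^\e)$ from the Step 1 balance laws gives $\int\pa_x u^\e\,P^\e\,dx-\tfrac1\e\int u^\e\rho^\e P^\e\,dx$ and no cubic term $\intr(v-u^\e)^3f^\e\,dv$. Also, (H2) plays no role in this proposition: the first condition in (H1) is already $2\int_\R\mh(U_0^\e|U_0)\,dx=\mathcal{O}(\e)$; (H2) enters only in the MKR part of Theorem \ref{thm_hydro}.

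The paper's own proof is organized differently and never differentiates $E(U^\e)$ in time. It uses the time-integrated identity of Lemma \ref{lem_rel} and writes $\int\mh(U^\e|U)\,dx=\sum_{i=1}^4J_i^\e$: $J_1^\e=\mathcal{O}(\e)$ by (H1); $J_2^\e=\int(E(U^\e)-E(U_0^\e))\,dx$ is handled by the elementary bound $E(U^\e)\le\tfrac12\intr v^2f^\e\,dv$, the monotonicity of the total kinetic energy, and (H1); $J_3^\e$ is the Gr\"onwall term via $|A(U^\e|U)|=\rho^\e(u^\e-u)^2$; and $J_4^\e$ is bounded by $\|\pa_xu\|_{L^\infty}\int_0^t\!\int f^\e(u^\e-v)^2\,dxdvds\le C\e$, using the scaled relaxation dissipation $\int_0^t\!\int f^\e(u^\e-v)^2\,dxdvds\le\tfrac\e2\int v^2f_0^\e\,dxdv$. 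In particular none of your Step 2 machinery (uniform $L^\infty$ bound on $f^\e$, uniform positive lower bound of the velocity support, the extra $Q_i$-dissipation $\tfrac{c_0}{\e}\int\rho^\e P^\e$) is invoked there. It is fair to point out that the drag contribution coming from $\intr vQ_i(f^\e)\,dv=-\rho^\e P^\e$ --- which in the paper's decomposition would enter $J_4^\e$ as $+\tfrac1\e\int_0^t\!\int u\,\rho^\e P^\e\,dxds$, and which is exactly the term (with coefficient $u^\e-u$ instead of $u$) that stalls your argument --- is not visible in the paper's displayed estimate of $J_4^\e$ either; so your bookkeeping correctly isolates where the real difficulty sits, but the proposal stops exactly where that work begins, and as written it does not prove the $\mathcal{O}(\e)$ bound.
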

\begin{proof}By the relative entropy estimate in Lemma \ref{lem_rel}, we obtain
\begin{align*}
&\int_\R \mh(U^\e|U)\,dx\cr
&\quad = \int_\R \mh(U_0^\e|U_0)\,dx + \int_\R (E(U^\e) - E(U^\e_0))\,dx - \int_0^t \int_\R \nabla (DE(U)):A(U^\e|U)\,dx ds \cr
&\qquad - \int_0^t \int_\R DE(U)\lt( \pa_s U^\e + \nabla \cdot A(U^\e)\rt) \,dx ds \cr
&\quad =: \sum_{i=1}^4 J_i^\e.
\end{align*}
The assumption {\bf (H1)} gives $J_1^\e = \mathcal{O}(\e)$. For the estimate of $J_2^\e$, we first notice that 
\[
(u^\e)^2 =  \lt(\frac{\intr v f^\e\,dv}{\intr f^\e\,dv} \rt)^2 \leq \frac{ \intr v^2 f^\e\,dv}{\rho^\e}, \quad \mbox{i.e.,} \quad \rho^\e(u^\e)^2 \leq \intr v^2 f^\e\,dv,
\]
and this implies
\bq\label{est_j20}
E(U^\e) = \frac12\int_\R \rho^\e(u^\e)^2\,dx  \leq \frac12\intr v^2 f^\e\,dx dv.
\eq
Thus, by adding and subtracting, we find
\begin{align}\label{est_j2}
\begin{aligned}
J_2^\e &= \int_\R E(U^\e)\,dx - \intrr v^2 f^\e\,dxdv \cr
&\quad + \intrr v^2 f^\e\,dxdv - \intrr v^2 f^\e_0\,dxdv \cr
&\quad + \intrr v^2 f^\e_0\,dxdv - \int_\R E(U_0)\,dx\cr
&\leq 0 + 0 + \mathcal{O}(\e).
\end{aligned}
\end{align}
Here we also used Lemma \ref{lem_lp} and {\bf (H1)}. 
We next use \cite[Proposition 2.2]{CC_pre} to estimate 
\[
J_3^\e \leq C\int_0^t \int_\R \mh(U^\e|U)\,dx ds,
\]
by using the following identity:
\[
\int_\R |A(U^\e|U)|\,dx = \int_\R \rho^\e(u^\e-u)^2\,dx.
\]
Moreover, by Theorem \ref{thm_main}, see also Lemma \ref{lem_lp}, we get that $f^\e$ satisfies
\bq\label{gd_est}
\int_0^t \int_{\R \times \R_+} f^\e (u^\e -v)^2\,dxdvds \leq \frac\e2\int_{\R \times \R_+} v^2 f^\e_0\,dxdv,
\eq
and this together with the assumption $({\bf H1})$ asserts 
\begin{align*}
J_4^\e &\leq \|\pa_x u\|_{L^\infty} \int_0^t \int_\R \lt|\intr ((u^\e)^2 - v^2)f^\e\,dv \rt|dxds\cr
&\leq \|\pa_x u\|_{L^\infty} \int_0^t \int_{\R \times \R_+} f^\e (u^\e -v)^2\,dxdvds\cr
&\leq C\e, 
\end{align*}
where $C = C(\|\pa_x u\|_{L^\infty}, \int_{\R \times \R_+} f^\e_0 v^2\,dxdv) > 0$. We finally combine all of the above estimates to conclude the proof.
\end{proof}
\subsection{Proof of Theorem \ref{thm_hydro}} We now show that the MKR distance can be bounded by the relative entropy, which directly gives the quantitative error estimate between $\rho$ and $\rho^\e$.

Note that the local densities $\rho$ and $\rho^\e$ satisfy
\[
\pa_t \rho + \pa_x(\rho u) = 0 \quad \mbox{and}  \quad \pa_t \rho^\e + \pa_x(\rho^\e u^\e) = 0,
\]
respectively. Let us define forward characteristics $X(t) := X(t;0,x)$ and $X^\e(t) := X^\e(t;0,x)$, $t \in [0,T]$ as solutions to 
\bq\label{eq_char}
\pa_t X(t) = u(X(t),t) \quad \mbox{and}  \quad \pa_t X^\e(t) = u^\e(X^\e(t),t)
\eq
with $X(0) = X^\e(0) = x \in \R$, respectively. Since $u$ is bounded and Lipschitz continuous on the time interval $[0,T]$, we find the solution $\rho$ uniquely exists and it can be determined as  the push-forward of the its initial densities through the flow maps $X$, i.e.,  $\rho(t) = X(t;0,\cdot) \# \rho_0$. Here $\cdot \,\# \,\cdot $ stands for the push-forward of a probability measure by a measurable map, more precisely, $\nu = \mt \# \mu$ for probability measure $\mu$ and measurable map $\mt$ implies
\[
\int_\R \varphi(y) \,d\nu(y) = \int_\R \varphi(\mt(x)) \,d\mu(x),
\]
for all $\varphi \in \mc_b(\R)$. On the other hand, due to the lack of regularity of $u^\e$, it is not clear to have the existence of solutions $X^\e$. To handle this issue, we recall the following proposition from \cite[Theorem 8.2.1]{AGS08}, see also \cite[Proposition 3.3]{FK19}. 
\begin{proposition}\label{prop_am}Let $T>0$ and $\rho : [0,T] \to \mathcal{P}(\R)$ be a narrowly continuous solution of \eqref{eq_char}, that is, $\rho$ is continuous in the duality with continuous bounded functions, for a Borel vector field $u$ satisfying
\bq\label{est_p1}
\int_0^T\int_\R |u(x,t)|^p\rho(x,t)\,dx dt < \infty,
\eq
for some $p > 1$. Let $\Gamma_T: [0,T] \to \R$ denote the space of continuous curves. Then there exists a probability measure $\eta$ on $\Gamma_T \times \R$ satisfying the following properties:
\begin{itemize}
\item[(i)] $\eta$ is concentrated on the set of pairs $(\gamma,x)$ such that $\gamma$ is an absolutely continuous curve satisfying
\[
\dot\gamma(t) = u(\gamma(t),t), 
\]
for almost everywhere $t \in (0,T)$ with $\gamma(0) = x \in \R$.
\item[(ii)] $\rho$ satisfies
\[
\int_\R \varphi(x)\rho\,dx = \int_{\Gamma_T \times \R}\varphi(\gamma(t))\,d\eta(\gamma,x),
\]
for all $\varphi \in \mc_b(\R)$, $t \in [0,T]$.
\end{itemize}
\end{proposition}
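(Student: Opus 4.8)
This statement is exactly Ambrosio's superposition principle for the continuity equation, so the plan is to deduce it from \cite[Theorem 8.2.1]{AGS08} (see also \cite[Proposition 3.3]{FK19}) by checking the hypotheses there, and then to recall the construction of $\eta$ for completeness. The hypotheses to be verified are: $t\mapsto\rho(t)$ is narrowly continuous, which is assumed; $\rho$ solves the continuity equation $\pa_t\rho+\pa_x(\rho u)=0$ in the distributional sense with Borel $u$; and the global integrability \eqref{est_p1} for some $p>1$. The hypothesis already places $\rho(t)$ in $\mathcal{P}(\R)$, so no finite-mass caveat of the abstract theorem is in force; and in the later application to \eqref{main_hydro} the exponent $p=2$ is admissible because Lemma \ref{lem_lp} bounds $\int_0^T\int_\R\rho^\e|u^\e|^2\,dxdt$ uniformly in $\e$.

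For the construction I would mollify in space: set $\rho^\delta:=\rho\star\psi_\delta$ and $u^\delta:=(\rho u\star\psi_\delta)/\rho^\delta$ on $\{\rho^\delta>0\}$, so that $(\rho^\delta,u^\delta)$ solves the continuity equation with a field that is bounded and Lipschitz in $x$ on each time slice, and $\|u^\delta(t)\|_{L^p(\rho^\delta(t))}\le\|u(t)\|_{L^p(\rho(t))}$ by Jensen's inequality. For such a regular field the ODE flow $X^\delta(\cdot;0,x)$ exists classically with $\rho^\delta(t)=X^\delta(t;0,\cdot)\#\rho^\delta(0)$; I then take $\eta^\delta\in\mathcal{P}(\Gamma_T\times\R)$ to be the image of $\rho^\delta(0)$ under $x\mapsto(X^\delta(\cdot;0,x),x)$. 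The bound $\int_{\Gamma_T\times\R}\int_0^T|\dot\gamma(t)|^p\,dt\,d\eta^\delta\le\int_0^T\|u^\delta(t)\|_{L^p(\rho^\delta(t))}^p\,dt$ is uniform in $\delta$, and, together with tightness of $\{\rho^\delta(0)\}$, it yields tightness of $\{\eta^\delta\}$ via an Ascoli-type criterion on $\Gamma_T$ that exploits $p>1$; pass to a narrowly convergent subsequence $\eta^\delta\rightharpoonup\eta$. Property (ii) then passes to the limit immediately from $\int_\R\varphi\,\rho^\delta(t)\,dx=\int_{\Gamma_T\times\R}\varphi(\gamma(t))\,d\eta^\delta$ and the narrow continuity of $\rho$.

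The substantive step, and the one I expect to be the main obstacle, is showing that $\eta$ is concentrated on absolutely continuous curves solving $\dot\gamma(t)=u(\gamma(t),t)$. Since $u$ is merely Borel, the defect $(\gamma,x)\mapsto\int_0^T|\dot\gamma(t)-u(\gamma(t),t)|\,dt$ is not narrowly lower semicontinuous, so one cannot pass to the limit in it directly. The remedy is to test against a bounded continuous vector field $w$: for bounded convex $\chi$ with $\chi(0)=0$ the functional $\gamma\mapsto\int_0^T\chi(\dot\gamma(t)-w(\gamma(t),t))\,dt$ is narrowly lower semicontinuous on $\Gamma_T$, so $\int(\int_0^T\chi(\dot\gamma-w(\gamma,\cdot))\,dt)\,d\eta\le\liminf_{\delta\to0}\int(\int_0^T\chi(\dot\gamma-w(\gamma,\cdot))\,dt)\,d\eta^\delta$; using $\dot\gamma=u^\delta(\gamma,\cdot)$ for $\eta^\delta$-a.e.\ curve and property (ii) for $\eta^\delta$, the right-hand side equals $\liminf_\delta\int_0^T\int_\R\chi(u^\delta-w)\,\rho^\delta\,dxdt$, which is controlled by $\int_0^T\int_\R\chi(u-w)\,\rho\,dxdt$ through lower semicontinuity under $\rho^\delta\rightharpoonup\rho$, $\rho^\delta u^\delta\rightharpoonup\rho u$ and the uniform $L^p$ bound. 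Letting first $\delta\to0$, then $w\to u$ in $L^1(d\rho\,dt)$, and finally $\chi\nearrow|\cdot|$ forces the left-hand integral to vanish, i.e.\ $\dot\gamma(t)=u(\gamma(t),t)$ for $\eta$-a.e.\ $(\gamma,x)$ and a.e.\ $t$, while absolute continuity of $\gamma$ is inherited from the uniform $L^p$-in-time bound. The delicate part is the ordering and justification of these three nested limits, for which the uniform integrability of $|\dot\gamma|^p$ under $\eta^\delta$ is essential; together with property (ii) this produces (i) and completes the proof.
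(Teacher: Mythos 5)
Your proposal is correct in substance, but note that the paper does not prove this proposition at all: it is recalled verbatim from \cite[Theorem 8.2.1]{AGS08} (see also \cite[Proposition 3.3]{FK19}) and used as a black box, the only ``checking'' done in the paper being the verification of \eqref{est_p1} with $p=2$ via the kinetic energy bound. What you have written is essentially a reconstruction of the proof of the superposition principle as it appears in the cited reference: spatial mollification $\rho^\delta=\rho\star\psi_\delta$, $u^\delta=(\rho u\star\psi_\delta)/\rho^\delta$, push-forward of $\rho^\delta(0)$ along the regularized flow to build $\eta^\delta$, tightness from the uniform bound on $\int\int_0^T|\dot\gamma|^p\,dt\,d\eta^\delta$, and the lower-semicontinuity argument with continuous test fields $w$ and the three nested limits $\delta\to0$, $w\to u$ in $L^1(\rho\,dx\,dt)$, $\chi\nearrow|\cdot|$ to identify the limit curves as solutions of the ODE. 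That outline is the standard and correct one, so your route agrees with the source the authors cite rather than with anything argued in the paper itself. Two technical caveats if you intend this as a complete proof rather than a recollection: the claim that $u^\delta$ is bounded and Lipschitz on each time slice is not automatic (with a strictly positive mollifier $u^\delta$ is smooth in $x$, but global bounds fail in general; one works with maximal solutions and uses the integrability \eqref{est_p1} together with Jensen's inequality to rule out blow-up for $\rho^\delta(0)$-a.e.\ starting point), and the statement you are proving should be read with the continuity equation $\pa_t\rho+\pa_x(\rho u)=0$ in the distributional sense in place of the (mislabeled) reference to \eqref{eq_char}, exactly as you interpreted it.
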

Note that it follows from \eqref{est_j20} and \eqref{est_j2} that
\[
\int_\R (u^\e)^2 \rho^\e\,dx \leq \int_{\R \times \R_+} v^2 f^\e\,dxdv \leq \int_{\R \times \R_+} v^2 f^\e_0\,dxdv < \infty,
\]
i.e., \eqref{est_p1} holds for $p=2$, and thus by Proposition \ref{prop_am}, we have the existence of a probability measure $\eta^\e$ in $\Gamma_T \times \R$, which is concentrated on the set of pairs $(\gamma,x)$ such that $\gamma$ is a solution of
\bq\label{eq_gam}
\dot{\gamma}(t) = u^\e(\gamma(t),t),
\eq
with $\gamma(0) = x$. Furthermore, it holds
\bq\label{eq_gam2}
\int_\R \varphi(x) \rho^\e(x,t)\,dx = \int_{\Gamma_T \times \R}\varphi(\gamma(t))\,d\eta^\e(\gamma,x),
\eq
for all $\varphi \in \mc_b(\R)$, $t \in [0,T]$.


We now consider the push-forward of the $\rho_0^\e$ through the flow map $X$ and denote it by $\bar\rho^\e$, i.e., $\bar\rho^\e = X \# \rho_0^\e$. Then for bounded Lipschitz function $\phi$, we find
\bq\label{est_rho1}
\lt|\int_\R \phi(x) (\rho(x) - \bar\rho^\e(x))\,dx\rt| = \lt|\int_\R \phi(X(t))(\rho_0(x) - \rho_0^\e(x))\,dx  \rt| \leq Cd_{MKR}(\rho_0, \rho_0^\e),
\eq
where $C>0$ is independent of $\e$, and we used the fact that the bounded Lipschitz distance is equivalent to MKR distance and $\phi(X)$ is bounded and Lipschitz. Indeed, we get 
\begin{align*}
|X(t;0,x) - X(t;0,y)| &\leq |x-y| + \int_0^t |u(X(s;0,x)) - |u(X(s;0,y))|\,ds\cr
&\leq |x-y| + \|\pa_x u\|_{L^\infty}\int_0^t |X(s;0,x) - X(s;0,y)|\,ds
\end{align*}
and  apply Gr\"onwall's lemma to derive the Lipschitz continuity of the characteristic flow $X(t;0,x)$ in $x$, and subsequently, this asserts
\[
|\phi(X(t;0,x)) - \phi(X(t;0,y))| \leq \|\phi\|_{Lip}|X(t;0,x) - X(t;0,y)| \leq \|\phi\|_{Lip} \|X\|_{Lip}|x-y|,
\]
where $\|\cdot\|_{Lip}$ denotes the Lipschitz constant given by
\[
\|\phi\|_{Lip} := \sup_{x \neq y \in \R} \frac{|\phi(x) - \phi(y)|}{|x-y|}.
\]
Thus we obtain from \eqref{est_rho1} that
\bq\label{est_rho11}
d_{MKR}(\rho(t), \bar\rho^\e(t)) \leq Cd_{MKR}(\rho_0, \rho_0^\e),
\eq
for $t \in [0,T]$, where $C > 0$ is independent of $\e > 0$. We next estimate the error between $\bar\rho^\e$ and $\rho^\e$. For this, we note that by the disintegration theorem of measures, see \cite{AGS08}, we can write
\[
d\eta^\e(\gamma,x) = \eta^\e_x(d\gamma) \otimes \rho^\e_0(x)\,dx,
\]
where $\{\eta^\e_x\}_{x \in \R}$ is a family of probability measures on $\Gamma_T$ concentrated on solutions of \eqref{eq_gam}. We then introduce a measure $\nu^\e$ on $\Gamma_T \times \Gamma_T \times \R$ defined by
\[
d\nu^\e(\gamma, x, \sigma) = \eta^\e_x(d\gamma) \otimes \delta_{X(\cdot;0,x)}(d\sigma) \otimes \rho^\e_0(x)\,dx.
\]
We further consider an evaluation map $E_t : \Gamma_T  \times \Gamma_T \times \R \to \R \times \R$ defined as $E_t(\gamma, \sigma, x) = (\gamma(t), \sigma(t))$. Then we readily find that measure $\pi^\e_t:= (E_t)\# \nu^\e$ on $\R \times \R$ has marginals $\rho^\e(x,t)\,dx$ and $\bar\rho^\e(y,t)\,dy$ for $t \in [0,T]$, see \eqref{eq_gam2}. This yields
\begin{align}\label{est_rho2}
\begin{aligned}
d_{MKR}(\rho^\e(t), \bar\rho^\e(t)) &\leq \int_{\R \times \R} |x-y|\,d\pi^\e_t(x,y)\cr
&=\int_{\Gamma_T \times \Gamma_T \times \R} |\sigma(t) - \gamma(t) | \,d\nu^\e(\gamma, \sigma, x) \cr
&= \int_{\Gamma_T \times \R} |X(t;0,x) - \gamma(t)| \,d\eta^\e(\gamma,x).
\end{aligned}
\end{align}
On the other hand, it follows from \eqref{eq_char} and \eqref{eq_gam} that
\begin{align*}
&\lt|X(t;0,x) -\gamma(t)\rt| \cr
&\quad = \lt|\int_0^t u(X(s;0,x)) - u^\e(\gamma(s),s)\,ds\rt|\cr
&\qquad \leq \int_0^t \lt|u(X(s;0,x)) - u(\gamma(s),s)\rt|ds + \int_0^t \lt|u(\gamma(s),s) - u^\e(\gamma(s),s)\rt|ds\cr
&\qquad \leq \|\pa_x u\|_{L^\infty}\int_0^t  \lt|X(s;0,x) - \gamma(s)\rt|ds + \int_0^t \lt|u(\gamma(s),s) - u^\e(\gamma(s),s)\rt|ds.
\end{align*}
Applying Gr\"onwall's lemma to the above asserts
\[
\lt|X(t;0,x) -\gamma(t)\rt| \leq C\int_0^t \lt|u(\gamma(s),s) - u^\e(\gamma(s),s)\rt|ds,
\]
where $C>0$ is independent of $\e>0$. Combining this with \eqref{est_rho2}, we have
\begin{align}\label{est_rho22}
\begin{aligned}
d_{MKR}(\rho^\e(t), \bar\rho^\e(t)) &\leq C\int_0^t \int_{\Gamma_T \times \R} \lt|u(\gamma(s),s) - u^\e(\gamma(s),s)\rt|d\eta^\e(\gamma,x)\,ds\cr
&\leq C\int_0^t \int_\R |u(x,s) - u^\e(x,s)| \rho^\e(x,s)\,dxds\cr
&\leq C\sqrt T\lt( \int_0^t \int_\R (u^\e(x,s) - u(x,s))^2 \rho^\e(x,s) \,dx ds\rt)^{1/2}\cr
&=C\lt(\int_0^t  \int_\R \mh(U^\e|U)\,dx ds\rt)^{1/2},
\end{aligned}
\end{align}
where $C>0$ is independent of $\e > 0$, and we used \eqref{eq_gam2}. We then combine \eqref{est_rho11} and \eqref{est_rho22} to conclude
\begin{align*}
d_{MKR}(\rho(t), \rho^\e(t)) &\leq d_{MKR}(\rho(t), \bar\rho^\e(t)) + d_{MKR}(\rho^\e(t), \bar\rho^\e(t))\cr
&\leq Cd_{MKR}(\rho_0, \rho_0^\e) + C\lt(\int_0^t  \int_\R \mh(U^\e|U)\,dx ds\rt)^{1/2},
\end{align*}
where $C>0$ is independent of $\e > 0$. We finally use the estimate \eqref{est_h1} to conclude that 
\[
d_{MKR}(\rho(t), \rho^\e(t)) \leq Cd_{MKR}(\rho_0, \rho_0^\e) + \mathcal{O}(\sqrt\e).
\]
We next estimate the MKR distance between $f^\e$ and $\rho\otimes \delta_u$. For $\phi \in Lip(\R \times \R_+)$, we estimate
\begin{align*}
&\lt| \int_{\R \times \R_+} \phi(x,v) f^\e(x,v)\,dxdv - \int_{\R \times \R_+} \phi(x,v)\rho(x)\,dx \otimes \delta_{u(x)}(dv)\rt|\cr
&\quad = \lt| \int_{\R \times \R_+} \phi(x,v) f^\e(x,v)\,dxdv - \int_\R \phi(x,u(x))\rho(x)\,dx\rt|\cr
&\quad \leq \lt| \int_{\R \times \R_+} \phi(x,v) f^\e(x,v)\,dxdv - \int_\R \phi(x,u(x))\rho^\e(x)\,dx\rt|\cr
&\qquad + \lt|\int_\R \phi(x,u(x))\rho^\e(x)\,dx - \int_\R \phi(x,u(x))\rho(x)\,dx\rt|.
\end{align*}
Since both $\phi$ and $u$ are Lipschitz, the second term on the right hand side of the above inequality can be bounded by 
\[
Cd_{MKR}(\rho^\e,\rho) \leq \mathcal{O}(\sqrt \e),
\]
where $C > 0$ is independent of $\e$. For the first term, we estimate
\begin{align*}
&\lt| \int_{\R \times \R_+} \phi(x,v) f^\e(x,v)\,dxdv - \int_\R \phi(x,u(x))\rho^\e(x)\,dx\rt|\cr
&\quad \leq \|\phi\|_{Lip} \int_{\R \times \R_+} |v - u| f^\e(x,v)\,dxdv\cr
&\quad \leq \|\phi\|_{Lip} \lt(\int_{\R \times \R_+} |v - u^\e| f^\e(x,v)\,dxdv + \int_\R |u^\e - u| \rho^\e(x)\,dx \rt)\cr
&\quad \leq C\lt( \int_{\R \times \R_+} (v - u^\e)^2 f^\e(x,v)\,dxdv + \int_\R (u^\e - u)^2 \rho^\e(x)\,dx \rt)^{1/2}\cr
&\quad \leq C\sqrt\e,
\end{align*}
due to \eqref{est_h1} and \eqref{gd_est}, where $C > 0$ is independent of $\e$. Combining all of the above estimates, we have
\[
\lt| \int_{\R \times \R_+} \phi(x,v) f^\e(x,v)\,dxdv - \int_{\R \times \R_+} \phi(x,v)\rho(x)\,dx \otimes \delta_{u(x)}(dv)\rt| \leq \mathcal{O}(\sqrt\e)
\]
for any $\phi \in Lip(\R \times \R_+)$. This completes the proof.

%
%
%
%

\section*{Acknowledgments}
Young-Pil Choi is supported by National Research Foundation of Korea(NRF) grants funded by the Korea government(MSIP) (No. 2017R1C1B2012918) and POSCO Science Fellowship of POSCO TJ Park Foundation. Seok-Bae Yun is supported by Samsung Science and Technology Foundation under Project Number SSTF-BA1801-02.

%
%
%
%

\end{document}